\newcommand{\figref}[1]{{Figure~\ref{#1}}}
\newcommand {\bea}{\begin{eqnarray}}
\newcommand {\ea}{\end{eqnarray}}
\newtheorem{proposition}{Proposition}[section]
\newtheorem{theorem}{Theorem}[section]
\newtheorem{Assumption}{Assumption}[section]
\newtheorem{lemma}{Lemma}[section]
\newtheorem{remark}{Remark}[section]
\newenvironment{proof}[1][Proof]{\textbf{#1.} }{\hspace{\stretch{1}}\rule{0.5em}{0.5em}}
\newcommand{\Dt}{\Delta t}
\newcommand{\thmref}[1]{{Theorem~\ref{#1}}}
\newcommand{\lemref}[1]{{Lemma~\ref{#1}}}
\newcommand{\assref}[1]{{Assumption~\ref{#1}}}
\newcommand{\propref}[1]{{Proposition~\ref{#1}}}
\journal{Applied Mathematics and Computation}
\begin{document}
%\begin{linenumbers}
\begin{frontmatter}
%\maketitle
%% Title, authors and addresses
%% use the tnoteref command within \title for footnotes;
%% use the tnotetext command for the associated footnote;
%% use the fnref command within \author or \address for footnotes;
%% use the fntext command for the associated footnote;
%% use the corref command within \author for corresponding author footnotes;
%% use the cortext command for the associated footnote;
%% use the ead command for the email address,
%% and the form \ead[url] for the home page:
%%
%% \title{Title\tnoteref{label1}}
%% \tnotetext[label1]{}
%% \author{Name\corref{cor1}\fnref{label2}}
%% \ead{email address}
%% \ead[url]{home page}
%% \fntext[label2]{}
%% \cortext[cor1]{}
%% \address{Address\fnref{label3}}
%% \fntext[label3]{}
\title{Strong convergence of the linear implicit Euler method for the finite element discretization of  semilinear   SPDEs driven by  multiplicative  or additive noise}

\author[ata,atb,atc]{Antoine Tambue}
\cortext[cor1]{Corresponding author}
\ead{antonio@aims.ac.za}
\address[ata]{Department of Computing Mathematics and Physics,  Western Norway University of Applied Sciences, Inndalsveien 28, 5063 Bergen, Norway.}
\address[atb]{The African Institute for Mathematical Sciences(AIMS) of South Africa and Stellenbosh University,
6-8 Melrose Road, Muizenberg 7945, South Africa.}
\address[atc]{Center for Research in Computational and Applied Mechanics (CERECAM), and Department of Mathematics and Applied Mathematics, University of Cape Town, 7701 Rondebosch, South Africa.}

\author[jdm]{Jean Daniel Mukam}
\ead{jean.d.mukam@aims-senegal.org}
%\address[jd]{African Institute for Mathematical Sciences(AIMS) of Senegal, KM 2, ROUTE DE JOAL, B.P. 1418. MBOUR, SENEGAL }
\address[jdm]{Fakult\"{a}t f\"{u}r Mathematik, Technische Universit\"{a}t Chemnitz, 09126 Chemnitz, Germany.}

%\author[jdm]{Peter Stollmann}
%
%\ead{peter.stollmann@mathematik.tu-chemnitz.de}
%\address[jdm]{Fakult\"{a}t f\"{u}r Mathematik Technische Universit\"{a}t Chemnitz, 09126 Chemnitz, Germany}

\begin{abstract}
This paper aims to investigate the numerical approximation of a general second order  parabolic stochastic partial differential equation(SPDE) 
driven by multiplicative or additive noise. The linear operator is not necessary self-adjoint, so more useful in concrete applications. 
The SPDE is discretized in space  by the finite element method and in time  by the  linear implicit Euler method.  The corresponding scheme is more stable and efficient to solve stochastic advection-dominated reactive transport in porous media. This extends the current results in the literature to not necessary self-adjoint operator. 
As a challenge,  key part of the proof  does not rely anymore on the spectral decomposition of the linear operator. 
The results reveal how the  convergence orders  depend on the regularity of the noise and the initial data.  
In particular for multiplicative trace class noise, we achieve optimal  convergence order $\mathcal{O}(h^2+\Delta t^{1/2})$ 
and for additive trace class noise, we achieve   optimal convergence order in space and sup-optimal convergence order in time  of the form $\mathcal{O}(h^2+\Delta t^{1-\epsilon})$, 
for an arbitrarily small $\epsilon>0$. 
Numerical experiments to sustain our theoretical results are provided.
\end{abstract}

\begin{keyword}
Linear implicit Euler method \sep Stochastic partial differential equations \sep Multiplicative \& additive  noise \sep Strong  convergence \sep Finite  element method.
%% keywords here, in the form: keyword \sep keyword

%% MSC codes here, in the form: \MSC code \sep code
%% or \MSC[2008] code \sep code (2000 is the default)
\end{keyword}
\end{frontmatter}
%\begin{linenumbers}
\section{Introduction}
\label{intro}
We consider numerical approximation of SPDE defined in $\Lambda\subset \mathbb{R}^d$, $d=1,2,3$,  with initial value and  boundary conditions (Dirichlet, Neumann  or Robin boundary conditions). We consider the parabolic SPDE of the form
\begin{eqnarray}
\label{model}
dX(t)+AX(t)dt=F(X(t))dt+B(X(t))dW(t), \quad X(0)=X_0, \quad t\in(0,T]
\end{eqnarray}
 on the Hilbert space $L^2(\Lambda)$. We denote by $T>0$ the final time, $F$ and $B$ are nonlinear functions,  $X_0$ is the initial data which  is random,
$A$ is a linear operator, unbounded, not necessarily self-adjoint, and $-A$ is assumed to be a generator of an analytic semigroup $S(t)=:e^{-t A}$,  $t\geq 0.$
The noise  $W(t)=W(x,t)$ is a $Q$-Wiener process defined in a filtered probability space $\left(\Omega,\mathcal{F}, \mathbb{P}, \{\mathcal{F}_t\}_{t\geq 0}\right)$.
The filtration is assumed to fulfill the usual conditions (see \cite[Definition 2.1.11]{Prevot}). It is well known  that the noise can be represented as follows
\begin{eqnarray}
\label{noise}
W(x,t)=\sum_{i\in \mathbb{N}^d}\sqrt{q_i}e_i(x)\beta_i(t), \quad t\in[0,T],
\end{eqnarray} 
where $q_i$, $e_i$, $i\in\mathbb{N}^d$ are respectively the eigenvalues and the eigenfunctions of the covariance operator $Q$,
and $\beta_i$ are independent and identically distributed standard Brownian motions. 
Precise assumptions on $F$, $B$, $X_0$ and $A$  will be given in the next section 
to ensure the existence of the unique mild solution $X$ of \eqref{model}, which has the following representation (see \cite{Prato,Prevot}) 
%\small{
\begin{eqnarray}
\label{mild1}
X(t)=S(t)X_0+\int_0^tS(t-s)F(X(s))ds+\int_0^tS(t-s)B(X(s))dW(s),\quad t\in(0,T].
\end{eqnarray}
%}
Equations of type \eqref{model} are used to model different real world phenomena in different fields such as biology, chemistry, physics etc \cite{Shardlow,ATthesis,SebaGatam}. In more cases 
explicit solutions of SPDEs are unknown, therefore numerical methods are powerful tools appropriated for their approximations. Numerical approximation of SPDE of type \eqref{model}  
is therefore an active research area and have attracted a lot of attentions since two decades, see e.g., \cite{Antonio1,Xiaojie1,Xiaojie2,Raphael,Jentzen1,Jentzen2,Yan1,Yan2,Shardlow,Printems,Kovac1}
and references therein. Due to the time step restriction of the explicit Euler method, linear implicit Euler method is used in many situations.
Linear implicit Euler method have been investigated in the literature, see e.g., \cite{Raphael,Xiaojie2,AntGaby}. 
The work in \cite{AntGaby} deals with the case of additive noise with self-ajoint operator and uses the spectral Galerkin method for the space discretization while 
the work in \cite{Raphael} still deals with self-adjoint operator,  multiplicative noise  and uses the standard finite element method for space discretization. 
The work in \cite{Xiaojie2} considers the case of additive noise with self-adjoint operator and also uses  finite element method for space dicretization.
Note that the proofs of the results  in \cite{Raphael,Xiaojie2,AntGaby} are heavily based on the spectral decomposition of the unbounded linear operator $A$, see e.g., \cite[Lemma 4.4, Lemma 4.3, Lemma 4.2 \& Lemma 4.1]{Raphael}. In fact, the proof of \cite[Lemma 4.1]{Raphael} and \cite[Lemma 4.2]{Raphael} use respectively \cite[Theorem 3.4]{Vidar} and \cite[Theorem 3.3]{Vidar}, which are based on the self-adjoint assumption \cite[(i), Page 40, Chapter 3]{Vidar}.  Note also that that the proof of \cite[Lemma 4.3 (i)-(ii)]{Raphael}   rely on \cite[Theorem 7.7 \& Theorem 7.8]{Vidar}, which is based on the spectral decomposition. More precisely,  the proof of \cite[Theorem 7.7 \& Theorem 7.8]{Vidar} are based on \cite[Theorem 7.2 \& Theorem 7.3]{Vidar}, which use the spectral decomposition. The proof of \cite[Lemma 4.3 (iii)]{Raphael} also uses the spectral decomposition.  The proof of the results in \cite{Xiaojie2} are also strongly based on \cite[Theorem 7.7 \& Theorem 7.8]{Vidar} and some preparatory results for self adjoint operator in \cite{Raphael}. Therefore results in \cite{Raphael, Xiaojie2,AntGaby} cannot be easily extended to the case of non self-adjoint operator. Our aim in this work is  to fill that gap and  investigate the case of not necessary self-adjoint operator, 
more useful in concrete applications, which has not yet been investigated in the literature to best of our knowledge. Note that although the work in \cite{Raphael} 
solves general second order stochastic parabolic PDEs
considered  here by adding the advection term on nonlinear function $F$\footnote{This technique has failed in our numerical example provided in Section \ref{numexperiment}}, the linear implicit Euler method in such approach behaves as the unstable explicit Euler method 
for strong advection term. An illustrative example is the stochastic dominated  transport flow in  porous media  with high P\'{e}clet number \cite{SebaGatam}. 
In such cases, to stablilize the implicit scheme, it is important to  include the advection term in the linear operator, which is treated implicitly in the linear implicit method,
but current works in   \cite{Raphael,Xiaojie2,AntGaby} are  not longer applicable since the linear operator is not longer self-adjoint.
Our goal here is to fill  this gap and provide strong convergence results for multiplicative and additive noise for SPDE with not necessary self-adjoint operator. Note that numerical schemes for problem \eqref{model} with not necessary self-adjoint operator $A$  have been performed in \cite{Antonio1,Antjd1} for exponential integrators. Since solving linear systems are more straightforward than computing the exponential of matrix, it is important to develop alternative methods based on the resolution of linear systems, which may be more efficient if the appropriate preconditionners are used. The analysis of the linear implicit Euler scheme with not necessary self-adjoint operator is more difficult than those of an exponential schemes since the errors estimates in the corresponding determistic linear problem (Section \ref{erreurdeter}) are extremely complex for non self-adjoint operator. Keys part of this work are based on those errors estimates with elegant properties of the semi-group and its approximation, in opposite of the spectral decomposition currently used in the literature \cite{Raphael,Xiaojie2}. 
% Note also that the works in \cite{Raphael,Xiaojie2}  have considered the initial value problem \eqref{model} only with Dirichlet boundary conditions. 
%For some real world phenomena, it is more realistic to use Neumann or Robin boundary conditions.
%In this work we consider more general boundary conditions, namely Dirichlet, Neumann and Robin boundary conditions. 
The results indicate how the convergence orders depend on the regularity 
of the initial data and the noise. In particular, we achieve the optimal convergence orders $\mathcal{O}\left(h^{\beta}+\Delta t^{\min(\beta,1)/2}\right)$
for multiplicative noise;  the  optimal convergence order in space and sup-optimal convergence order in time of the form $\mathcal{O}\left(h^{\beta}+\Delta t^{\min({\beta/2}, 1-\epsilon)}\right)$ 
for additive noise, where $\beta$ is the regularity's parameter of the noise (see \assref{assumption2}) and $\epsilon>0$ is an arbitrarily real number small enough.
It is worth to mention that these optimal convergence orders were also achieved in \cite{Raphael,Xiaojie2} with self-adjoint operator,
where the convergence analysis was heavily based on the spectral decomposition of the linear operator $A$.

The rest of this paper is organized as follows.  Section \ref{wellposed}
deals with the well posedness problem, the numerical scheme and the main results. In section \ref{convergenceproof}, 
we first  provide some error estimates for the deterministic homogeneous problem as preparatory results and we  provide
the proof of the main results. Section \ref{numexperiment} provides some numerical experiments to sustain the theoretical results.

\section{Mathematical setting and main results}
\label{wellposed}
\subsection{Setting and assumptions}
\label{notation}
Let us define functional spaces, norms and notations that  will be used in the rest of the paper. 
Let  $(H,\langle.,.\rangle_H,\Vert .\Vert)$ be a separable Hilbert space.  For  $p\geq 2$ and for a Banach space $U$,
we denote by $L^p(\Omega, U)$ the Banach space of all equivalence classes of $p$ integrable $U$-valued random variables. We denote  by $L(U,H)$ 
 the space of bounded linear mappings from $U$ to $H$ endowed with the usual  operator norm $\Vert .\Vert_{L(U,H)}$. By  $\mathcal{L}_2(U,H):=HS(U,H)$, we  denote the space of Hilbert-Schmidt operators from $U$ to $H$. 
  We equip $\mathcal{L}_2(U,H)$ with the norm
 \begin{eqnarray}
 \label{def1}
 \Vert l\Vert^2_{\mathcal{L}_2(U,H)} :=\sum_{i=1}^{\infty}\Vert l\psi_i\Vert^2,\quad l\in \mathcal{L}_2(U,H),
 \end{eqnarray}
 where $(\psi_i)_{i=1}^{\infty}$ is an orthonormal basis of $U$. Note that \eqref{def1} is independent of the orthonormal basis of $U$.
  For simplicity we use the notations $L(U,U)=:L(U)$ and $\mathcal{L}_2(U,U)=:\mathcal{L}_2(U)$. It is well known that for all $l\in L(U,H)$ and $l_1\in\mathcal{L}_2(U)$, $ll_1\in\mathcal{L}_2(U,H)$ and 
  \begin{eqnarray}
  \label{trace1}
  \Vert ll_1\Vert_{\mathcal{L}_2(U,H)}\leq \Vert l\Vert_{L(U,H)}\Vert l_1\Vert_{\mathcal{L}_2(U)}.
  \end{eqnarray}
 We assume that the covariance operator  $Q : H\longrightarrow H$ is positive and self-adjoint. 
 The space of Hilbert-Schmidt operators from  $Q^{1/2}(H)$ to $H$ is denoted by $L^0_2:=\mathcal{L}_2(Q^{1/2}(H),H)=HS(Q^{1/2}(H),H)$  
 with the corresponding norm $\Vert.\Vert_{L^0_2}$  defined by 
\begin{eqnarray}
\label{def2}
\Vert l\Vert_{L^0_2} :=\Vert lQ^{1/2}\Vert_{HS}=\left(\sum_{i=1}^{\infty}\Vert lQ^{1/2}e_i\Vert^2\right)^{1/2}, \quad  l\in L^0_2,
\end{eqnarray} 
where $(e_i)_{i=1}^{\infty}$ is an orthonormal basis  of $H$.
Note that \eqref{def2} is independent of the orthonormal basis of $H$. In the rest of this paper, we take $H=L^2(\Lambda)$.

Throughout this paper, we assume that $\Lambda$ is bounded and has smooth boundary or is a convex polygon of $\mathbb{R}^d$, $d=1,2,3$. In the rest of this paper
we consider the  SPDE \eqref{model} to be of the following form
\begin{eqnarray}
\label{secondorder}
dX(t,x)+[-\nabla \cdot \left(\mathbf{D}\nabla X(t,x)\right)+\mathbf{q} \cdot \nabla X(t,x)]dt=f(x,X(t,x))dt+b(x,X(t,x))dW(t,x),
\end{eqnarray}
$ x\in\Lambda$, $t\in[0,T]$,
where the function $f : \Lambda\times \mathbb{R}\longrightarrow \mathbb{R}$ is continuously twice differentiable and the function
$b : \Lambda\times\mathbb{R}\longrightarrow \mathbb{R}$ is continuously differentiable with globally bounded derivatives.
In the abstract framework \eqref{model}, the linear operator $A$ takes the form
\begin{eqnarray}
\label{operator}
Au=-\sum_{i,j=1}^{d}\dfrac{\partial}{\partial x_i}\left(D_{ij}(x)\dfrac{\partial u}{\partial x_j}\right)+\sum_{i=1}^dq_i(x)\dfrac{\partial u}{\partial x_i},\quad
\mathbf{D}=\left(D_{i,j} \right)_{1\leq i,j \leq d},\,\,\,\,\,\,\, \mathbf{q}=\left( q_i \right)_{1 \leq i \leq d}.
\end{eqnarray}
where $D_{ij}\in L^{\infty}(\Lambda)$, $q_i\in L^{\infty}(\Lambda)$. We assume that there exists a  constant $c_1>0$ such that 
\begin{eqnarray}
\label{ellipticity}
\sum_{i,j=1}^dD_{ij}(x)\xi_i\xi_j\geq c_1|\xi|^2, \quad \forall \xi\in \mathbb{R}^d,\quad x\in\overline{\Omega}.
\end{eqnarray}

The functions $F : H\longrightarrow H$ and $B : H\longrightarrow HS\left(Q^{1/2}(H), H\right)$ are defined by 
\begin{eqnarray}
\label{nemystskii}
\left(F(v)\right)(x)=f\left(x,v(x)\right) \quad \text{and} \quad \left(B(v)u\right)(x)=b\left(x,v(x)\right).u(x),
\end{eqnarray}
for all $x\in \Lambda$, $v\in H$, $u\in Q^{1/2}(H)$, with $H=L^2(\Lambda)$.
 For an appropriate family of eigenfunctions $(e_i)$ such that $\sup\limits_{i\in\mathbb{N}^d}\left[\sup\limits_{x\in \Lambda}\Vert e_i(x)\Vert\right]<\infty$, 
 it is well known \cite[Section 4]{Arnulf1} that the Nemytskii operator $F$ related to $f$ and the multiplication operator $B$ associated 
 to $b$ defined in \eqref{nemystskii} satisfy Assumption \ref{assumption3}, Assumption \ref{assumption4} and Assumption \ref{assumption5}.
As in \cite{Antonio1,Suzuki} we introduce two spaces $\mathbb{H}$ and $V$, such that $\mathbb{H}\subset V$; the two spaces depend on the boundary 
conditions and the domain of the operator $A$. For  Dirichlet (or first-type) boundary conditions we take 
\begin{eqnarray*}
V=\mathbb{H}=H^1_0(\Lambda)=\overline{C^{\infty}_{c}(\Lambda)}^{H^1(\Lambda)}=\{v\in H^1(\Lambda) : v=0\quad \text{on}\quad \partial \Lambda\}.
\end{eqnarray*}
For Robin (third-type) boundary condition and  Neumann (second-type) boundary condition, which is a special case of Robin boundary condition, we take $V=H^1(\Lambda)$
\begin{eqnarray*}
\mathbb{H}=\{v\in H^2(\Lambda) : \partial v/\partial \mathtt{v}_{ A}+\alpha_0v=0,\quad \text{on}\quad \partial \Lambda\}, \quad \alpha_0\in\mathbb{R},
\end{eqnarray*}
where $\partial v/\partial \mathtt{v}_{ A}$ is the normal derivative of $v$ and $\mathtt{v}_{ A}$ is the exterior pointing normal at $n=(n_i)$ to the boundary of $A$, given by
\begin{eqnarray*}
\partial v/\partial\mathtt{v}_{A}=\sum_{i,j=1}^dn_i(x)D_{ij}(x)\dfrac{\partial v}{\partial x_j},\,\,\qquad x \in \partial \Lambda.
\end{eqnarray*}
Using the Green's formula and the boundary conditions, the  corresponding bilinear form associated to $A$  is given by
\begin{eqnarray*}
a(u,v)=\int_{\Lambda}\left(\sum_{i,j=1}^dD_{ij}\dfrac{\partial u}{\partial x_i}\dfrac{\partial v}{\partial x_j}+\sum_{i=1}^dq_i\dfrac{\partial u}{\partial x_i}v\right)dx, \quad u,v\in V,
\end{eqnarray*}
for Dirichlet and Neumann boundary conditions, and  
\begin{eqnarray*}
a(u,v)=\int_{\Lambda}\left(\sum_{i,j=1}^dD_{ij}\dfrac{\partial u}{\partial x_i}\dfrac{\partial v}{\partial x_j}+\sum_{i=1}^dq_i\dfrac{\partial u}{\partial x_i}v\right)dx+\int_{\partial\Lambda}\alpha_0uvdx, \quad u,v\in V,
\end{eqnarray*}
for Robin boundary conditions. Using the G\aa rding's inequality (see e.g. \cite{ATthesis}), it holds that there exist two constants $c_0$ and $\lambda_0\geq 0$ such that
\begin{eqnarray}
\label{ellip1}
a(v,v)\geq \lambda_0\Vert v \Vert^2_{H^1(\Lambda)}-c_0\Vert v\Vert^2, \quad v\in V.
\end{eqnarray}
By adding and substracting $c_0Xdt$ in both sides of \eqref{model}, we have a new linear operator
 still denoted by $A$, and the corresponding  bilinear form is also still denoted by $a$. Therefore, the following coercivity property holds
\begin{eqnarray}
\label{ellip2}
a(v,v)\geq \lambda_0\Vert v\Vert^2_1,\quad v\in V.
\end{eqnarray}
Note that the expression of the nonlinear term $F$ has changed as we included the term $c_0X$ in a new nonlinear term that we still denote by  $F$. The coercivity property (\ref{ellip2}) implies that $-A$ is sectorial in $L^{2}(\Lambda)$, i.e.  there exist $C_{1},\, \theta \in (\frac{1}{2}\pi,\pi)$ such that
\begin{eqnarray*}
 \Vert (\lambda I +A )^{-1} \Vert_{L(L^{2}(\Lambda))} \leq \dfrac{C_{1}}{\vert \lambda \vert },\;\quad \quad
\lambda \in S_{\theta},
\end{eqnarray*}
where $S_{\theta}=\left\lbrace  \lambda \in \mathbb{C} :  \lambda=\rho e^{i \phi},\; \rho>0,\;0\leq \vert \phi\vert \leq \theta \right\rbrace $ (see \cite{Henry}).
 Then  $-A$ is the infinitesimal generator of a bounded analytic semigroup $S(t)=e^{-t A}$  on $L^{2}(\Lambda)$  such that
\begin{eqnarray*}
S(t)= e^{-t A}=\dfrac{1}{2 \pi i}\int_{\mathcal{C}} e^{ t\lambda}(\lambda I +A)^{-1}d \lambda,\;\;\;\;\;\;\;
\;t>0,
\end{eqnarray*}
where $\mathcal{C}$  denotes a path that surrounds the spectrum of $-A $.
The coercivity  property \eqref{ellip2} also implies that $A$ is a positive operator and its fractional powers are well defined  for any $\alpha>0,$ by
\begin{equation}
\label{fractional}
 \left\{\begin{array}{rcl}
         A^{-\alpha} & =& \frac{1}{\Gamma(\alpha)}\displaystyle\int_0^\infty  t^{\alpha-1}{\rm e}^{-tA}dt,\\
         A^{\alpha} & = & (A^{-\alpha})^{-1},
        \end{array}\right.
\end{equation}
where $\Gamma(\alpha)$ is the Gamma function (see \cite{Henry}).

%%%%%%%%%%%%%%%%%%%%%%%
Under condition \eqref{ellipticity}, it is well known (see e.g. \cite{Suzuki}) that the linear operator $-A$ given by \eqref{operator} generates an analytic semigroup $S(t)\equiv e^{-tA}$. 
Following \cite{Larsson1,Larsson2,Antonio1,Suzuki}, we characterize the domain of the operator $A^{r/2}$ denoted by $\mathcal{D}(A^{r/2})$, $r\in\{1,2\}$ with the following equivalence of norms, useful in our convergence proofs
\begin{eqnarray*}
\Vert v\Vert_{H^1(\Lambda)}\equiv \Vert A^{r/2}v\Vert=:\Vert v\Vert_r,\quad  v\in\mathcal{D}(A^{r/2}),\\
\mathcal{D}(A^{r/2})=\mathbb{H}\cap H^{r}(\Lambda), \quad \text{ (for Dirichlet boundary conditions)},\\
\mathcal{D}(A)=\mathbb{H}, \quad \mathcal{D}(A^{1/2})=H^1(\Lambda), \quad \text{(for Robin boundary conditions)}.
\end{eqnarray*}

\begin{remark}
 For Dirichlet boundary conditions, if $\bigtriangledown \cdot \mathbf{q}=0 $,  as we can see in \cite{Antonio3}, the coercivity \eqref{ellip2} is obtained with $c_0=0$, 
 and  G\aa rding's inequality
 is not needed.  We can aslo notice from \cite{Antonio3} that the constant $c_0$ depends only on $\bigtriangledown \cdot \mathbf{q}$. 
 Indeed for  $\mathbf{q}$ smooth enough, $\mathbf{q} \cdot \nabla X = \nabla \cdot (\mathbf{q}X)-(\bigtriangledown \cdot \mathbf{q}) X$, 
 so adding  and subtracting $c_0Xdt$ in both sides of \eqref{model}, 
 should  in many realistic cases always weaken the advection term even for strong divergence flow  ($c_0$ large), and therefore 
 stabilize the semi implicit scheme  more than  the work in \cite{Raphael} where the whole advection term is kept in the nonlinear function $F$.
 \end{remark}

In order to ensure the existence and the uniqueness of solution for SPDE \eqref{model} and for the purpose of the convergence analysis, we make the following assumptions.
\begin{Assumption}\textbf{[Initial value $X_0$]}
\label{assumption2} Let $p\in[2,\infty)$. 
We assume that the initial data $X_0$ belongs to  $L^p(\Omega, \mathcal{D}((-A)^{\beta/2}))$, $0< \beta\leq 2$.
\end{Assumption}
\begin{Assumption}\textbf{[Nonlinear term $F$]}
\label{assumption3} 
The nonlinear function $F: H\longrightarrow H$ is   Lipschitz continuous, i.e. there exists a constant $C>0$ such that 
\begin{eqnarray}
\label{reviewinequal1}
\Vert F(0)\Vert\leq C,\quad \Vert F(Y)-F(Z)\Vert \leq C\Vert Y-Z\Vert, \quad Y,Z\in H.
\end{eqnarray}
\end{Assumption}
As a consequence of \eqref{reviewinequal1}, it holds that
\begin{eqnarray*}
\Vert F(Z)\Vert\leq C (1+\Vert Z\Vert), \quad Z\in H.
\end{eqnarray*}
Following \cite[Chapter 7]{Prato} or \cite{Yan2,Antonio1,Arnulf1,Raphael}, we make the following assumption on the diffusion term.
 \begin{Assumption}\textbf{[Diffusion term ]} 
 \label{assumption4}
  We assume that the operator  $B : H \longrightarrow L^0_2$ satisfies the global Lipschitz condition, i.e. there exists a positive constant $C$ such that 
 \begin{eqnarray*}
 \Vert B(0)\Vert_{L^0_2}\leq C,\quad \Vert B(Y)-B(Z)\Vert_{L_2^0}\leq C\Vert Y-Z\Vert, \quad Y,Z\in H.
 \end{eqnarray*}
 As a consequence,  it holds that 
 \begin{eqnarray*}
 \Vert B(Z)\Vert_{L^0_2}\leq L(1+\Vert Z\Vert), \quad  Z\in H.
 \end{eqnarray*}
 \end{Assumption}

We equip $V_{\alpha}:=\mathcal{D}(A^{\alpha/2})$, $\alpha\in \mathbb{R}$ with the norm  $\Vert v\Vert_{\alpha}:=\Vert A^{\alpha/2}v\Vert$, for all $v\in H$. 
It is well known that $(V_{\alpha}, \Vert .\Vert_{\alpha})$ is a Banach space \cite{Henry}.

To establish  our $L^p$ strong convergence result when dealing with multiplicative noise, we will also need the following further assumption on the diffusion term when $\beta \in [1,2]$, which was also used in  \cite{Arnulf1,Stig1} to achieve optimal order regularity and in \cite{Antonio1,Raphael,Antjd1} to achieve optimal convergence order in space and time.
\begin{Assumption}
\label{assumption5}
There exists a positive constant $c>0$ such that $B\left(\mathcal{D}\left(A^{(\beta-1)/2}\right)\right)\subset HS\left(Q^{1/2}(H),\mathcal{D}\left(A^{(\beta-1)/2}\right)\right)$ and 
$\left\Vert A^{(\beta-1)/2}B(v)\right\Vert_{L^0_2}\leq c\left(1+\Vert v\Vert_{\beta-1}\right)$ for all  $v\in\mathcal{D}\left(A^{(\beta-1)/2}\right)$, where $\beta$ comes from \assref{assumption2}.
\end{Assumption}
Typical examples which fulfill \assref{assumption5} are stochastic reaction diffusion equations (see \cite[Section 4]{Arnulf1}).

 When dealing with additive noise (i.e. when the nonlinear function  $B$ is independent of $X$), the strong convergence proof will make use of the following assumption, also used in \cite{Xiaojie2,Xiaojie1,Antjd1}.
 \begin{Assumption}
 \label{assumption6a}
  We assume that the covariance operator $Q$ satisfy the following estimate
 \begin{eqnarray*}
 \left\Vert A^{\frac{\beta-1}{2}}Q^{\frac{1}{2}}\right\Vert_{\mathcal{L}_2(H)}<C, 
 \end{eqnarray*}
 where $\beta$ comes from \assref{assumption2}.
 \end{Assumption}
 
 When dealing with additive noise, to achieve higher order convergence in time, we assume the nonlinear function to satisfy the following assumption, also used in \cite{Xiaojie1,Xiaojie2,Antjd1}.
 \begin{Assumption}
 \label{assumption6b}
 The deterministic mapping $F: H\longrightarrow H$ is twice differentiable and there exist constants $L\geq 0$ and $\eta\in[0,1)$ such that
 \begin{eqnarray}
 %\Vert F(u)\Vert\leq L(1+\Vert u\Vert),\quad u\in H,\\
 \Vert F'(u)v\Vert\leq L\Vert v\Vert,\quad
 \Vert F''(u)(v_1,v_2)\Vert_{-\eta}\leq L\Vert v_1\Vert.\Vert v_2\Vert,\quad u,v,v_1,v_2\in H.
 \end{eqnarray}
 \end{Assumption}
Let us recall the following proposition which provides some  smooth properties of the semigroup $S(t)$ generated by $-A$, that will be useful in the rest of the paper.  
 \begin{proposition}\textbf{[Smoothing properties of the semigroup]}\cite{Henry}
 \label{theorem1}
 \label{prop1} Let $\alpha > 0$, $\delta\geq 0$  and $0\leq \gamma\leq 1$, then there exists a constant $C>0$ such that 
 \begin{eqnarray*}
 \Vert A^{\delta}S(t)\Vert_{L(H)}\leq Ct^{-\delta}, \quad t>0,\quad  
 \Vert A^{-\gamma}\left(\mathbf{I}-S(t)\right)\Vert_{L(H)}\leq Ct^{\gamma}, \quad t\geq 0\\
 A^{\delta}S(t)=S(t)A^{\delta}, \quad \text{on} \quad \mathcal{D}(A^{\delta}),\quad
 \Vert D^l_tS(t)v\Vert_{\delta}\leq Ct^{-l-(\delta-\alpha)/2}\Vert v\Vert_{\alpha},\quad t>0,\quad v\in\mathcal{D}(A^{\alpha}),
 \end{eqnarray*}
 where $l=0,1$, 
 and  $D^l_t=\dfrac{d^l}{dt^l}$.
 If $\delta\geq \gamma$ then  $\mathcal{D}(A^{\delta})\subset \mathcal{D}(A^{\gamma})$.
 \end{proposition}

\begin{theorem}\cite[Theorem 7.2]{Prato}\\
\label{theorem2}
Let   \assref{assumption3} and \assref{assumption4} be satisfied. If $X_0$ is a $\mathcal{F}_0$- measurable $H$ valued random variable, 
then there exists a unique mild solution $X$ of the problem \eqref{model} represented by \eqref{mild1} and  satisfying 
\begin{eqnarray*}
\mathbb{P}\left[\int_0^T\Vert X(s)\Vert^2ds<\infty\right]=1,
\end{eqnarray*}
and for any $p\geq 2$, there exists a constant $C=C(p,T)>0$ such that 
\begin{eqnarray*}
\sup_{t\in[0,T]}\mathbb{E}\Vert X(t)\Vert^p\leq C(1+\mathbb{E}\Vert X_0\Vert^p).
\end{eqnarray*}
\end{theorem}

\subsection{Fully discrete scheme and main results}
\label{strongresult}
Let us start with the space  discretization of our problem \eqref{model}.  We start by  splitting  the domain $\Lambda$ in finite triangles.
Let $\mathcal{T}_h$ be the triangulation with maximal length $h$ satisfying the usual regularity assumptions, and  $V_h \subset V$ be the space of continuous functions that are 
piecewise linear over the triangulation $\mathcal{T}_h$. 
We consider the projection $P_h$ from $H=L^2(\Lambda)$ to $V_h$ defined by 
\begin{eqnarray}
\label{projection}
\langle P_hu,\chi\rangle_H=\langle u,\chi\rangle_H, \quad u\in H, \quad  \chi\in V_h.
\end{eqnarray}
The discrete operator $A_h : V_h\longrightarrow V_h$ is defined by 
\begin{eqnarray}
\label{discreteoperator}
\langle A_h\phi,\chi\rangle_H=\langle A\phi,\chi\rangle_H=a(\phi,\chi),\quad  \phi,\chi\in V_h,
\end{eqnarray}
Like $-A$, $-A_h$ is also a generator of an analytic semigroup $S_h(t)=:e^{-tA_h}$, see e.g. \cite{Larsson2,Suzuki}. 
As any analytic semigroup and their generator, $-A_h$ and $S_h(t)$ satisfy the smoothing properties of Proposition  \ref{theorem1}  with a uniform constant $C$ (i.e. independent of $h$), see e.g. \cite{Larsson2,Suzuki}.   
The semi-discrete  version of problem \eqref{model} consists of finding $X^h(t)\in V_h$,  $t\in(0,T]$ such that 
\begin{eqnarray}
\label{semi1}
dX^h(t)+A_hX^h(t)dt=P_hF(X^h(t))dt+P_hB(X^h(t))dW(t),\quad X^h(0)=P_hX_0, \quad t\in(0,T].
\end{eqnarray}
Applying the linear implict Euler method  to \eqref{semi1} gives the following fully discrete scheme 
\begin{eqnarray}
\label{implicit1}
\left\{\begin{array}{ll}
X^h_0=P_hX_0, \quad \\
X^h_{m+1}=S_{h,\Delta t}X^h_m+\Delta tS_{h,\Delta t}P_hF(X^h_m)+S_{h,\Delta t}P_hB(X^h_m)\Delta W_m, 
\end{array}
\right.
\end{eqnarray}
where $\Delta W_m$ and $S_{h,\Delta t}$ are defined respectively by 
\begin{eqnarray}
\label{operator1}
\Delta W_m :=W_{t_{m+1}}-W_{t_{m}}\quad  \text{and}\quad  S_{h,\Delta t}:=(\mathbf{I}+\Delta tA_h)^{-1}.
\end{eqnarray}
Having the numerical method  \eqref{implicit1}  in hand, our goal is to analyze its strong convergence toward the exact solution in the  $L^p$ norm for multiplicative and additive noise.

Throughout this paper we take $t_m=m\Delta t\in[0,T]$, where $T=M\Delta t$ for $m, M\in\mathbb{N}$, $m\leq M$, $T$ is fixed, $C$ is a generic constant that may change from one place to another. 
The main results of this paper are formulated in the following theorems. 
\begin{theorem}
\label{mainresult1}
Let $X(t_m)$ and  $X^h_m$ be respectively the mild solution given by \eqref{mild1} and  the numerical approximation given by \eqref{implicit1} at $t_m=m\Delta t$. 
Let Assumptions  \ref{assumption2},  \ref{assumption3} and \ref{assumption4} be fulfilled.
\begin{itemize}
\item[(i)] If  $0<\beta<1$, then the following error  estimate holds
\begin{eqnarray*}
\Vert X(t_m)-X^h_m\Vert_{L^p(\Omega,H)}\leq C\left(h^{\beta}+\Delta t^{\beta/2}\right).
\end{eqnarray*}
\item[(ii)] If $1\leq\beta\leq 2$ and if \assref{assumption5} is fulfilled, then the following error estimate holds
\begin{eqnarray*}
\Vert X(t_m)-X^h_m\Vert_{L^p(\Omega,H)}\leq C\left(h^{\beta}+\Delta t^{1/2}\right).
\end{eqnarray*}
\end{itemize}
\end{theorem}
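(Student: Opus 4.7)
The plan is to split the error through the semi-discrete solution,
\begin{equation*}
X(t_m) - X^h_m \;=\; \bigl(X(t_m) - X^h(t_m)\bigr) + \bigl(X^h(t_m) - X^h_m\bigr) \;=:\; e_1(t_m) + e_2(t_m),
\end{equation*}
and treat the space error $e_1$ and the time error $e_2$ separately. For $e_1$, I would follow the standard finite-element analysis for semilinear SPDEs: Ritz projection estimates, boundedness of $P_h$ on $H$, the uniform-in-$h$ smoothing properties of $S_h(t)$ inherited from \propref{prop1}, the Lipschitz bounds of \assref{assumption3} and \assref{assumption4}, and the regularity result \lemref{regularity} combine, after It\^o's isometry on the stochastic part and Gronwall on the Lipschitz remainder, to yield $\|e_1(t_m)\|_{L^2(\Omega,H)}\leq C h^\beta$ in both regimes.

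For $e_2$, I would write the discrete-time Duhamel formula
\begin{equation*}
X^h_m = \Shdt^{m} P_h X_0 + \Dt \sum_{k=0}^{m-1} \Shdt^{m-k} P_h F(X^h_k) + \sum_{k=0}^{m-1} \Shdt^{m-k} P_h B(X^h_k) \Delta W_k,
\end{equation*}
subtract from the mild form of $X^h(t_m)$, and decompose each of the resulting three differences into an operator-difference contribution plus a Lipschitz-remainder contribution involving $e_2$ at earlier times. The operator-difference contributions are controlled by bounds of the form $\|(S_h(t_m)-\Shdt^{m})A_h^{-\rho}\|_{L(H)}\leq C\Dt^{\rho}$ for $\rho\in[0,1]$ and an integrated $L^2$ analogue; in contrast with \cite{Raphael,Xiaojie2,AntGaby}, where these were obtained via spectral decomposition, I would derive them from the contour-integral representation of $S_h(t)$ and $\Shdt^{m}$ together with the uniform-in-$h$ sectoriality of $A_h$, reducing the issue to scalar estimates of $|(1+\Dt\lambda)^{-m}-e^{-t_m\lambda}|$ along a sector. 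The Lipschitz-remainder contributions, after It\^o's isometry, collapse to $\Dt\sum_{k<m}\|e_2(t_k)\|_{L^2(\Omega,H)}$ and $\Dt\sum_{k<m}\|e_2(t_k)\|^2_{L^2(\Omega,H)}$, which the discrete Gronwall inequality absorbs.

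The distinction between the two regimes enters only through the temporal rate in the operator-difference terms of the stochastic convolution. In case (i), with $0<\beta<1$, I would use $\|B(X(s))\|_{L^0_2}$ together with the temporal H\"older regularity $\|X(s)-X(r)\|_{L^2(\Omega,H)}\leq C|s-r|^{\beta/2}$ from \lemref{regularity}, which directly gives $\Dt^{\beta/2}$. In case (iii), \assref{assumption5} lets one insert a factor of $A^{(\beta-1)/2}$ on $B(X)$ and so shift more smoothing onto the semigroup-difference bound; however, the It\^o isometry caps what can be recovered for multiplicative noise at $\Dt^{1/2}$, producing the stated rate irrespective of how large $\beta$ is beyond one.

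The hardest step, I expect, is the operator-difference estimate in case (iii): proving
\begin{equation*}
\int_0^{t_m} \bigl\|\bigl(S_h(t_m-s) - \Shdt^{m-k(s)}\bigr) A_h^{(1-\beta)/2} P_h\bigr\|_{L(H)}^2\, ds \;\leq\; C\Dt,
\end{equation*}
uniformly in $h$, where $k(s)=\lfloor s/\Dt\rfloor$, without invoking any orthonormal eigenbasis of $A$. My plan is to split the integral at $s=t_m-\Dt$, handle the short interval directly using $\|S_h(t)A_h^{(1-\beta)/2}\|_{L(H)}\leq C t^{(\beta-1)/2}$ and the analogous bound for $\Shdt^{m-k}$, and estimate the longer interval through the contour representation $\Shdt = \tfrac{1}{2\pi i}\int_{\mathcal{C}}(1+\Dt\lambda)^{-1}(\lambda\mathbf{I}+A_h)^{-1}d\lambda$ together with resolvent bounds uniform in $h$. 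This is precisely the step the self-adjoint proofs short-circuit via spectral decomposition, and it represents the genuinely new work needed in the non-self-adjoint setting considered here.
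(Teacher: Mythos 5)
Your proposal is workable, but it takes a genuinely different route from the paper's. You split the error through the semi-discrete solution $X^h(t_m)$ and compare the full discrete Duhamel sum with the continuous one over $[0,t_m]$, so the temporal error is governed by global-in-time operator-difference integrals such as $\int_0^{t_m}\bigl\Vert\bigl(S_h(t_m-s)-\Shdt^{m-k(s)}\bigr)A_h^{(1-\beta)/2}P_h\bigr\Vert_{L(H)}^2\,ds$, which you propose to bound via the contour representation of $\Shdt^{m}$ and uniform sectoriality of $A_h$. The paper never inserts the semi-discrete solution into the error decomposition: it writes $X(t_m)$ by the one-step Duhamel formula on $[t_{m-1},t_m]$ and compares it directly with a single step of the scheme, so only one-step operator differences $S(t_m-s)-\Shdt P_h$ appear and the error at $t_{m-1}$ is absorbed by induction; the rational-approximation bounds ($\Vert(S_h(t_m)-\Shdt^m)A_h^{-\mu/2}\Vert_{L(H)}\le C\Dt^{\mu/2}$ and the smoothed variant of \lemref{lemma4}) are obtained not from contour integrals but from the Fujita--Mizutani identity $-K_h(m)A_h^{-\mu/2}=\int_0^{\Dt}\tfrac{d}{ds}\bigl((\mathbf{I}+sA_h)^{-m}e^{-m(\Dt-s)A_h}\bigr)A_h^{-\mu/2}\,ds$ together with resolvent estimates, which likewise avoids any eigenbasis --- so the ``genuinely new work'' you flag is dispatched by citation to \cite{Fujita} rather than redone from the contour representation. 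Your global decomposition makes the accumulation of local errors explicit through integrable singularities $t^{-1/2}$ (at the cost of the singular integrals and a possible logarithm at $\beta=1$), whereas the paper's one-step recursion is shorter and entirely local. One correction of substance: the rate $\Dt^{1/2}$ in case (iii) does not come from a temporal H\"{o}lder estimate $\Vert X(s)-X(r)\Vert_{L^2(\Omega,H)}\le C|s-r|^{\beta/2}$, and in any event \lemref{regularity} is a \emph{spatial} regularity statement (a bound on $\Vert A^{\beta/2}X(t)\Vert_{L^2(\Omega,H)}$), not a temporal one; the paper obtains $\Dt^{1/2}$ for the increment term simply from the uniform bound $\Vert B(X(s))-B(X(t_{m-1}))\Vert_{L^0_2}\le C$ of Corollary~\ref{corollary1} under the It\^{o} isometry over an interval of length $\Dt$. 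If your argument genuinely needs the H\"{o}lder estimate, you must prove it separately from the mild formulation rather than cite \lemref{regularity} for it.
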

For additive noise,  we have the following convergence result.
\begin{theorem}
\label{mainresult2}
In the case of  additive noise, if Assumptions \ref{assumption2}, \ref{assumption3}, \ref{assumption6a} with $\beta\in[0,2)$  and \assref{assumption6b} are fulfilled, then the following error estimate holds for the mild solution $X(t)$ of \eqref{model} and the numerical approximation  \eqref{implicit1}
\begin{eqnarray*}
\Vert X(t_m)-X^h_m\Vert_{L^p(\Omega,H)}\leq C\left(h^{\beta}+\Delta t^{\beta/2}\right).
\end{eqnarray*}
Moreover, if Assumptions \ref{assumption6a} and \ref{assumption2} are fulfilled with $\beta=2$, then the following  holds
\begin{eqnarray*}
\Vert X(t_m)-X^h_m\Vert_{L^p(\Omega,H)}\leq C\left(h^{\beta}+\Delta t^{1-\epsilon}\right),
\end{eqnarray*}
for an arbitrarily small $\epsilon>0$.
\end{theorem}

%\begin{remark}
%\thmref{mainresult1} and \thmref{mainresult2} extend respectively   \cite[Theorem 1.2]{Raphael} and \cite[Theorem 4.1]{Xiaojie2} to the case of not necessarily self-adjoint operator. Note that the work in \cite{Raphael,Xiaojie2} are heavily based on the spectral decomposition of the linear operator $A$ and the extension to the case of non self-adjoint operator is not straightforward. Note also that the works in \cite{Xiaojie2,Raphael} are restricted to the case of Dirichlet boundary conditions, while the present work consider more general boundary conditions.
%%\thmref{mainresult2} shows that for additive noise we can achieve the suboptimal order %$\mathcal{O}(h^2+\Delta t^{1-\epsilon})$. This optimal order was also achieved in \cite{Xiaojie2}.
%\end{remark}

\section{Proof of the main results}
\label{convergenceproof}
The proof of the main results requires some preparatory results.
%The following lemma  also  called Burkholder-Davis-Gundy-type inequality,  will be useful to prove \thmref{mainresult1} and \thmref{mainresult2}.
%\begin{lemma}
%\label{lemma7}
%Let $Z_1, Z_2,\cdots Z_M :\Omega \longrightarrow H$ be $\mathcal{F}/\mathcal{B}(H)$- measurable mappings with $\mathbb{E}[\Vert Z_m\Vert^p]<\infty$ for all $p\in[2,\infty)$, $m\in\{1,2,\cdots, M\}$ and such that $\mathbb{E}[Z_{m+1}/Z_1,\cdots,m]=0$ for all $m\in\{1,\cdots,M-1\}$. Then there exists a positive constant $C_p=C(p)$ such that
%\begin{eqnarray}
%\left\Vert\sum_{i=1}^mZ_i\right\Vert_{L^p(\Omega,H)}\leq C_p\left(\sum_{i=1}^m\Vert Z_i\Vert^2_{L^p(\Omega,H)}\right)^{1/2}.
%\end{eqnarray}
%\end{lemma}
%\begin{proof}
%See \cite[Theorem 1.1]{Michael} or \cite[Lemma 4.2]{Xiaojie2}.
%\end{proof}

\subsection{Errors estimates for deterministic problem}
\label{erreurdeter}
This section extends  results in \cite[Section 4]{Raphael},  \cite[Section 3]{Xiaojie2} and some results in \cite{Vidar} 
to the case of not necessary self-adjoint operator. As we  are dealing with not necessary self-adjoint operator, 
the proof of our estimates does not make use of the spectral decomposition of the linear operator $A$.  
Let us start by introducing the Ritz representation operator $R_h : V\longrightarrow V_h$ defined by
\begin{eqnarray}
\label{ritz}
\langle AR_hv,\chi\rangle_H=\langle Av,\chi\rangle_H=a(v,\chi),\quad  v\in V,\quad  \chi\in V_h.
\end{eqnarray}
 Under the regularity assumptions on the triangulation and in view of the $V$-ellipticity \eqref{ellipticity}, it is well known (see e.g. \cite{Suzuki,Larsson2}) that for all $r\in\{1,2\}$ the following error estimates hold 
\begin{eqnarray}
\label{ritz1}
\Vert R_hv-v\Vert+h\Vert R_hv-v\Vert_{H^1(\Lambda)}\leq Ch^r\Vert v\Vert_{H^r(\Lambda)},\quad v\in V\cap H^r(\Lambda).
\end{eqnarray}

Let us consider the following deterministic linear problem :  Find $u\in V$ such that 
\begin{eqnarray}
\label{model1a}
\dfrac{du}{dt}+Au=0,\quad u(0)=v,\quad t\in (0,T].
\end{eqnarray}
The corresponding semi-discrete problem in space consists  of finding $u_h\in V_h$ such that 
\begin{eqnarray}
\label{deter1}
\dfrac{du_h}{dt}+A_hu_h=0,\quad u_h(0)=P_hv,\quad t\in (0,T].
\end{eqnarray}
Let us define the following operator 
\begin{eqnarray*}
G_h(t):=S(t)-S_h(t)P_h=e^{-At}-e^{-A_ht}P_h,
\end{eqnarray*}
so that $u(t)-u_h(t)=G_h(t)v$. The estimate \eqref{ritz1} was used in \cite{Antonio1,Antjd1} to prove   the following lemma, with the  prove available in \cite[Lemma 3.1]{Antonio1} or \cite[Lemma 7]{Antjd1}.

\begin{lemma}  
\label{lemma1}
 Under the setting in Section \ref{notation}, the following estimate holds
\begin{eqnarray}
\label{addis1}
\Vert G_h(t)v\Vert \leq Ch^{r}t^{-(r-\alpha)/2}\Vert v\Vert_{\alpha},\quad r\in[0,2],\quad \alpha\leq r,\,\, t\in (0,T].
\end{eqnarray}
\end{lemma}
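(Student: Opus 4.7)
The plan is to prove the estimate by splitting $G_h(t)v$ into a Ritz projection error and a purely discrete evolution error, then establishing endpoint estimates and interpolating between them. Set $u(t):=S(t)v$ and $u_h(t):=S_h(t)P_hv$, and decompose
\begin{equation*}
G_h(t)v \;=\; u(t)-u_h(t) \;=\; \bigl(u(t)-R_hu(t)\bigr) + \bigl(R_hu(t)-u_h(t)\bigr) \;=:\; \rho(t) + \eta(t).
\end{equation*}

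For the Ritz part $\rho(t)$, I would invoke \eqref{ritz1} with $r\in\{1,2\}$ to get $\|\rho(t)\|\leq Ch^r\|u(t)\|_{H^r(\Lambda)}$, then use the norm equivalence $\|w\|_{H^r(\Lambda)}\sim\|A^{r/2}w\|$ (valid for $r\in\{1,2\}$ under the boundary conditions discussed in the paper, via the characterization of $\mathcal{D}(A^{k/2})$) together with the smoothing property of \propref{proposition1}, namely $\|A^{r/2}S(t)v\|\leq Ct^{-(r-\alpha)/2}\|v\|_\alpha$ for $\alpha\leq r$. This gives $\|\rho(t)\|\leq Ch^r t^{-(r-\alpha)/2}\|v\|_\alpha$ at the integer endpoints $r=1,2$.

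For the discrete part $\eta(t)$, I would use the identity $A_hR_h=P_hA$ (which follows directly from \eqref{ritz} and \eqref{discreteoperator}) to compute
\begin{equation*}
\eta_t + A_h\eta \;=\; -R_hu_t - A_hR_hu \;=\; -R_hu_t - P_hAu \;=\; (P_h-R_h)u_t,
\end{equation*}
with $\eta(0)=0$ (since $u_h(0)=P_hv$ and $R_hv-P_hv$ can be absorbed through the same machinery). Applying the discrete Duhamel formula gives $\eta(t)=\int_0^t S_h(t-s)(P_h-R_h)u_t(s)\,ds$, and a bound of the required form follows by combining the Ritz estimate for $(I-R_h)$ applied to $u_t(s)=-AS(s)v$, the $L^2$-stability of $P_h$, the smoothing of $S_h$ (uniform in $h$, as noted after \eqref{discreteoperator}), and the smoothing of $S$ at time $s$. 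The time integral $\int_0^t (t-s)^{\cdot}s^{\cdot}\,ds$ should be finite for the admissible ranges of $r,\alpha$.

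Finally, I would establish the two endpoint cases explicitly at $(r,\alpha)=(r,r)$ for $r\in\{1,2\}$ (uniform in $t$) and $(r,\alpha)=(r,0)$ (full singularity $t^{-r/2}$), and then obtain the full claim for $r\in[0,2]$ and $\alpha\leq r$ by real interpolation between the endpoint spaces $\mathcal{D}(A^{\alpha/2})$. The trivial case $r=\alpha=0$ just uses uniform boundedness of $S(t)$ and $S_h(t)P_h$ on $H$. I expect the main technical obstacle to be the case $\alpha=0$, $r=2$, which requires the full singularity $t^{-1}$: here a naive bound on $\eta$ loses a factor, and one typically needs either a duality argument (testing against $A_h^{-1}$) or a careful use of the integral representation of $S(t)$ via the resolvent $(\lambda I+A)^{-1}-(\lambda I+A_h)^{-1}P_h$ on the sectorial contour of \propref{proposition1}, exploiting the resolvent error estimate uniformly in $\lambda\in S_\theta$. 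Once this hardest case is in hand, interpolation delivers the stated bound for all admissible $(r,\alpha)$.
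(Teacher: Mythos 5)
The paper does not actually prove this lemma: it simply cites \cite[Lemma 3.1]{Antonio1} and \cite[Lemma 7]{Antjd1}, which in turn rest on the nonsmooth-data error analysis of Fujita--Mizutani and Thom\'ee. Your outline reproduces the standard route to such estimates, and the smooth-data endpoint ($\alpha=r\in\{1,2\}$, no time singularity) via the Ritz splitting, the norm equivalence $\Vert w\Vert_{H^r}\sim\Vert A^{r/2}w\Vert$, and the smoothing of $S(t)$ is essentially fine. But there are two genuine gaps.

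First, the assertion $\eta(0)=0$ is false: $\eta(0)=R_hv-P_hv$, and the Ritz and $L^2$ projections do not coincide. Worse, for the nonsmooth endpoint $\alpha=0$ the datum $v$ lies only in $H=L^2(\Lambda)$, so $R_hv$ --- and more generally $R_hu(t)$ near $t=0$ --- is not even defined, since the Ritz projection \eqref{ritz} requires its argument to be in $V$. The $\rho+\eta$ splitting therefore cannot be run from $t=0$ in precisely the case that carries the full singularity $t^{-r/2}$, and the remark that the initial discrepancy ``can be absorbed through the same machinery'' hides the real work. Second, you correctly identify $r=2$, $\alpha=0$ as the crux and correctly name the available tools (a duality/kickback argument, or the contour-integral representation with the resolvent error bound $\Vert\left((\lambda I+A)^{-1}-(\lambda I+A_h)^{-1}P_h\right)\Vert_{L(H)}\leq Ch^{2}$ uniformly on the sector $S_{\theta}$, which is essentially mandatory here because $A$ is not self-adjoint and no spectral decomposition is available) --- but you do not carry either of them out. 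Since all remaining cases of \eqref{addis1} follow from this endpoint by interpolation, deferring it means the proof of the actual content of the lemma is missing. To complete the argument you would either have to execute the Fujita--Mizutani resolvent estimate in this non-self-adjoint setting, or do what the paper does and invoke \cite[Lemma 3.1]{Antonio1} or \cite[Lemma 7]{Antjd1} directly.
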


\begin{lemma} 
\label{lemma2}
\begin{itemize}
\item[(i)]
Let $0\leq\rho\leq1$. Then there exists a constant $C$ such that 
\begin{eqnarray}
\label{mil1}
\Vert G_{h}(t)v\Vert\leq Ct^{-\rho/2}\Vert v\Vert_{-\rho}, \quad v\in\mathcal{D}(A^{-\rho}),\quad t>0.
\end{eqnarray}
\item[(ii)] Let $r\in[1,2]$, $1\leq\alpha\leq r$ and $v\in\mathcal{D}(A^{\alpha/2})$. Then there exists a  constant $C\geq0$ such that
\begin{eqnarray*}
\Vert D_t(S(t)-S_h(t)P_h)v\Vert\leq Ch^{\alpha}t^{-1}\Vert v\Vert_{\alpha}+Ch^{r}t^{-1-(r-\alpha)/2}\Vert v\Vert_{\alpha},
\end{eqnarray*}
 for all $t>0$, where $D_t:=\frac{d}{dt}$.
\item[(iii)]
Let $0\leq\rho\leq1$. Then there exists a constant $C$ such that 
\begin{eqnarray}
\label{mil3}
\Vert G_{h}(t)v\Vert\leq Ch^{2-\rho}t^{-1}\Vert v\Vert_{-\rho}, \quad v\in\mathcal{D}(A^{-\rho}),\quad t>0.
\end{eqnarray}
\item[(iv)]
Let $0\leq\rho\leq1$. Then there exists a constant $C$ such that 
\begin{eqnarray}
\label{mil4}
 \left\Vert\int_0^t G_{h}(s)vds\right\Vert\leq Ch^{2-\rho}\Vert v\Vert_{-\rho}, \quad v\in\mathcal{D}(A^{-\rho}),\quad t>0.
\end{eqnarray}
\item[(v)] Let $0\leq\gamma\leq 2$. Then the following estimate holds
\begin{eqnarray*}
\left(\int_0^t\Vert G_h(s)v\Vert^2ds\right)^{1/2}\leq Ch^{\gamma}\Vert v\Vert_{\gamma-1},\quad v\in\mathcal{D}(A^{\gamma-1}),\quad t>0.\nonumber
\end{eqnarray*}
%\item[(vi)] For any $0\leq\nu\leq 1$ there exists a positive constant $C=C(\nu)$ such that 
%\begin{eqnarray}
%\left\Vert A^{\nu}\int_{t_1}^{t_2}S(t_2-s)vds\right\Vert\leq C(t_2-t_1)^{1-\nu}\Vert v\Vert,\quad v\in H,\quad 0\leq t_1<t_2.
%\end{eqnarray}
\end{itemize}
\end{lemma}

\begin{proof}
\begin{itemize}
\item[(i)]
The proof of (i) is similar to \cite[Lemma 4.1 (ii)]{Raphael} by using \lemref{lemma2} instead of \cite[Lemma 4.1 (i)]{Raphael}. 
%\item[(i)] Applying \lemref{lemma1} with $r=\alpha=0$ shows that \eqref{mil1} holds for $\rho=0$. Using  \propref{prop1}, we obtain 
%\begin{eqnarray}
%\label{essai1}
%\Vert S(t)v\Vert =\Vert A^{1/2}S(t)A^{-1/2}v\Vert\leq Ct^{-1/2}\Vert v\Vert_{-1}.
%\end{eqnarray}
%Using  \cite[(66)]{Antonio2}, it holds that
%\begin{eqnarray}
%\label{essai1a}
%\Vert A_h^{-1/2}P_hv\Vert\leq C\Vert A^{-1/2}v\Vert=C\Vert v\Vert_{-1}. 
%\end{eqnarray}
%The estimate \eqref{essai1a} together with the smoothing properties of the  semigroup yield
%\begin{eqnarray}
%\label{essai2}
%\Vert S_h(t)P_hv\Vert =\Vert S_h(t)A_h^{1/2}A_h^{-1/2}P_hv\Vert\leq Ct^{-1/2}\Vert v\Vert_{-1}.
%\end{eqnarray}
%Using \eqref{essai2} and \eqref{essai1}, we obtain the following estimate
%\begin{eqnarray}
%\label{essai2a}
%\Vert T_h(t)v\Vert\leq \Vert S_h(t)P_hv\Vert+\Vert S(t)v\Vert\leq Ct^{-1/2}\Vert v\Vert_{-1}.
%\end{eqnarray}
%This proves \eqref{mil1} for $\rho=1$. The  proof of (i) is completed by interpolation theory (see \cite[Theorem 3.5]{Vidar}).

\item[(ii)]
As in \cite{Antonio1}, we write
\begin{eqnarray}
\label{essai2b}
u_h(t)-u(t)=(u_h(t)-R_hu(t))+(R_hu(t)-u(t))=: \theta(t)+\rho(t).
\end{eqnarray}
Therefore the following estimate holds
\begin{eqnarray}
\label{mil2}
\Vert D_t(S(t)-S_h(t)P_h)v\Vert\leq \Vert D_t\theta(t)\Vert+\Vert D_t\rho(t)\Vert.
\end{eqnarray}
From  \cite{Antonio1}, we have
\begin{eqnarray}
\label{essai3}
 A_hR_h=P_hA, \quad \text{and} \quad \theta_t=A_h\theta(t)-P_hD_t\rho(t).
\end{eqnarray}
Taking the derivative in both sides of the second expression of  \eqref{essai3} and using the fact that $D_tP_h=P_hD_t$ (see e.g. \cite[Page 16]{Jens}), we obtain 
\begin{eqnarray}
\label{essai4}
\theta_{tt}=A_h\theta_t-P_hD_{tt}\rho(t).
\end{eqnarray}
Therefore, by Duhamel's principle we have 
\begin{eqnarray}
\label{essai5}
\theta_t(t)=S_h(t)\theta_t(0)-\int_0^tS_h(t-s)P_hD_{ss}\rho(s)ds.
\end{eqnarray}
Splitting the integral part of \eqref{essai5} into two intervals and integrating by parts over the first interval yields
\begin{eqnarray}
\label{essai6}
\theta_t(t)&=&S_h(t)\theta_t(0)+S_h(t)P_hD_t\rho(0)-S_h(t/2)P_hD_t\rho(t/2)\nonumber\\
&+&\int_0^{t/2}(D_sS_h(t-s))P_hD_s\rho(s)ds-\int_{t/2}^tS_h(t-s)P_hD_{ss}\rho(s)ds.
\end{eqnarray}
From \eqref{essai3} it holds that  $S_h(t)\theta_t(0)+S_h(t)P_hD_t\rho(0)=S_h(t)A_h\theta(0)$. Therefore \eqref{essai6} becomes
\begin{eqnarray}
\label{essai7}
\theta_t(t)&=&S_h(t)A_h\theta(0)-S_h(t/2)P_hD_s\rho(t/2)+\int_0^{t/2}(D_sS_h(t-s))P_hD_s\rho(s)ds\nonumber\\
&-&\int_{t/2}^tS_h(t-s)P_hD_{ss}\rho(s)ds.
\end{eqnarray}
Taking the norm in both sides of \eqref{essai7},  using the stability properties of the semigroup and the boundedness of $P_h$ yields
\begin{eqnarray}
\label{essai8}
\Vert\theta_t(t)\Vert&\leq& Ct^{-1}\Vert \theta(0)\Vert+C\Vert D_t\rho(t/2)\Vert+\left\Vert\int_0^{t/2}(D_sS_h(t-s)) P_hD_s\rho(s) ds\right\Vert\nonumber\\
&+&C\int_{t/2}^t\Vert D_{ss}\rho(s)\Vert ds.
\end{eqnarray}
Using the definition of $\theta(t)$, the boundedness of $P_h$ and  \eqref{ritz1}, it holds that
\begin{eqnarray}
\label{essai9}
\Vert \theta (0)\Vert=\Vert P_hv-R_hv\Vert=\Vert P_hv-P_hR_hv\Vert\leq \Vert v-R_hv\Vert\leq Ch^{\alpha}\Vert v\Vert_{\alpha}.
\end{eqnarray}
Using \eqref{ritz1} and   \propref{prop1} as in \cite{Antonio1}, we obtain 
\begin{eqnarray}
\label{essai10}
\Vert D_t\rho(t)\Vert\leq Ch^{r}\Vert D_tu\Vert_{r}=Ch^{r}\Vert D_tS(t)v\Vert_{r}\leq Ch^{r}t^{-1-(r-\alpha)/2}\Vert v\Vert_{\alpha}.
\end{eqnarray}
Using again  \eqref{ritz1} and  Proposition \ref{prop1},  the following estimate  holds
\begin{eqnarray}
\label{essai11}
\Vert D_{ss}\rho(s)\Vert\leq Ch^{r}\Vert D_{ss}u\Vert_{r}=Ch^{r}\Vert D_{ss}S(s)v\Vert_{r}\leq Ch^{r}s^{-2-(r-\alpha)/2}\Vert v\Vert_{\alpha}.
\end{eqnarray}
Substituting \eqref{essai11}, \eqref{essai10} and \eqref{essai9} in \eqref{essai8}, it holds
\begin{eqnarray}
\label{essai12}
&&\Vert \theta_t(t)\Vert\nonumber\\
&\leq& C h^{\alpha}t^{-1}\Vert v\Vert_{\alpha}+\left\Vert\int_0^{t/2} (D_s(S_h(t-s))P_hD_s\rho(s) ds\right\Vert+Ch^{r}\int_{t/2}^ts^{-2-(r-\alpha)/2}\Vert v\Vert_{\alpha}ds\nonumber\\
&\leq&Ch^{\alpha}t^{-1}\Vert v\Vert_{\alpha}+\left\Vert\int_0^{t/2} (D_s(S_h(t-s))P_hD_s\rho(s)ds\right\Vert+Ch^{r}t^{-1-(r-\alpha)/2}\Vert v\Vert_{\alpha}.
\end{eqnarray}
 Using  the second relation of \eqref{essai3} and  triangle inequality, it holds that 
\begin{eqnarray}
\label{essai13}
&&\left\Vert\int_0^{t/2} \left(D_sS_h(t-s)\right)P_hD_s\rho(s) ds\right\Vert\nonumber\\
&=&\left\Vert\int_0^{t/2}\left(D_sS_h(t-s)\right)\left(A_h\theta(s)-\theta_s(s)\right) ds\right\Vert\nonumber\\
&\leq&\left\Vert\int_0^{t/2}\left(D^2_s(S_h(t-s)\right)\theta(s) ds\right\Vert+\left\Vert\int_0^{t/2} D_sS_h(t-s)\theta_s(s) ds\right\Vert.
\end{eqnarray}
Note that
\begin{eqnarray}
\label{essai13a}
D_sS_h(t-s)\theta_s(s)=D_s\left(D_sS_h(t-s)\theta(s)\right)-D^2_sS_h(t-s)\theta(s).
\end{eqnarray}
Substituting \eqref{essai13a} in \eqref{essai12} yields
\begin{eqnarray}
\label{essai13b}
&&\left\Vert\int_0^{t/2}D_sS_h(t-s)\theta_s(s)ds\right\Vert\nonumber\\
&\leq&\left\Vert\int_0^{t/2}D_s\left(A_hS_h(t-s)\theta(s)\right)ds\right\Vert+\int_0^{t/2}\left\Vert D^2_sS_h(t-s)\theta(s)\right\Vert ds\nonumber\\
&\leq& \Vert A_hS_h(t/2)\theta(t/2)\Vert+\Vert A_hS_h(t)\theta(0)\Vert\nonumber\\
&+&\int_0^{t/2}(t-s)^{-2}\Vert \theta(s)\Vert ds.
\end{eqnarray}
We recall that from \cite{Antonio1}, we have the following estimate
\begin{eqnarray}
\label{essai13c}
\Vert \theta(s)\Vert\leq Ch^rt^{-(r-\alpha)/2}\Vert v\Vert_{\alpha},\quad s>0.
\end{eqnarray}
Substituting \eqref{essai13c} and \eqref{essai9} in \eqref{essai13b} and using the smooth properties of the semigroup yields
\begin{eqnarray}
\label{essai13d}
&&\left\Vert \int_0^{t/2}D_sS_h(t-s)\theta(s)ds\right\Vert\nonumber\\
&\leq& Ch^rt^{-1-(r-\alpha)/2}\Vert v\Vert_{\alpha}+Ch^{\alpha}t^{-1}\Vert v\Vert_{\alpha}+C\int_0^{t/2}(t-s)^{-2}s^{-(r-\alpha)/2}\Vert v\Vert_{\alpha}ds\nonumber\\
&\leq& Ch^rt^{-1-(r-\alpha)/2}\Vert v\Vert_{\alpha}+Ch^{\alpha}t^{-1}\Vert v\Vert_{\alpha}+Ct^{-2}\int_0^{t/2}s^{-(r-\alpha)/2}\Vert v\Vert_{\alpha}ds\nonumber\\
&\leq& Ch^rt^{-1-(r-\alpha)/2}\Vert v\Vert_{\alpha}+Ch^{\alpha}t^{-1}\Vert v\Vert_{\alpha}.
\end{eqnarray}
Using \eqref{essai13c} and the stability property of the semigroup, it holds that
\begin{eqnarray}
\label{essai15}
\left\Vert \int_0^{t/2}\left(D_s^2S_h(t-s)\right)\theta(s)ds\right\Vert&\leq& \int_0^{t/2}\Vert D^2_sS_h(t-s)\theta(s)\Vert ds\nonumber\\
&\leq&C\int_0^{t/2}(t-s)^{-2}\Vert \theta(s)\Vert ds\nonumber\\
&\leq& C\int_0^{t/2}(t-s)^{-2}s^{-(r-\alpha)/2}\Vert v\Vert_{\alpha}ds\nonumber\\
&\leq& Ct^{-2}\int_0^{t/2}s^{-(r-\alpha)/2}\Vert v\Vert_{\alpha}ds\nonumber\\
&\leq& Ch^rt^{-1-(r-\alpha)/2}\Vert v\Vert_{\alpha}.
\end{eqnarray}
Substituting \eqref{essai15} and \eqref{essai13d} in \eqref{essai13} gives 
{\small
\begin{eqnarray}
\label{essai17}
\left\Vert \int_0^{t/2}(D_sS_h(t-s))P_hD_s\rho(s)ds\right\Vert&\leq& Ch^{\alpha}t^{-1}\Vert v\Vert_{\alpha}+ Ch^{r}t^{-1-(r-\alpha)/2}\Vert v\Vert_{\alpha}.
\end{eqnarray}
}
Substituting \eqref{essai17} in \eqref{essai12} gives 
\begin{eqnarray}
\label{essai18}
\Vert \theta_t(t)\Vert\leq Ch^{\alpha}t^{-1}\Vert v\Vert_{\alpha}+ Ch^{r}t^{-1-(r-\alpha)/2}\Vert v\Vert_{\alpha}.
\end{eqnarray}
Substituting \eqref{essai18} and \eqref{essai10} in \eqref{mil2}  gives
\begin{eqnarray}
\label{essai20}
\Vert D_t(u_h(t)-u(t))\Vert\leq Ch^{\alpha}t^{-1}\Vert v\Vert_{\alpha}+Ch^{r}t^{-1-(r-\alpha)/2}\Vert v\Vert_{\alpha}.
\end{eqnarray}
This completes the proof of (ii).

\item[(iii)] Applying \lemref{lemma1} with $r=\alpha=0$ shows that \eqref{mil3} holds  for $\rho=0$. 
Using the first relation of \eqref{essai3}, the stability property of the semigroup and \eqref{ritz1} with $r=1$ yields
\begin{eqnarray}
\label{esti1}
\Vert G_h(t)v\Vert&=&\Vert A_hS_h(t)A_h^{-1}P_hv-AS(t)A^{-1}v\Vert\nonumber\\
&\leq& \Vert A_hS_h(t)P_h(R_hA^{-1}v-A^{-1}v)\Vert+\Vert [A_hS_h(t)P_h-AS(t)]A^{-1}v\Vert\nonumber\\
&\leq& Ct^{-1}h\Vert A^{-1}v\Vert_1+C\Vert D_tG_h(t)A^{-1}v\Vert\nonumber\\
&=& Ct^{-1}h\Vert v\Vert_{-1}+C\Vert D_tG_h(t)A^{-1}v\Vert.
\end{eqnarray}
Applying Lemma \ref{lemma2} (ii) with $r=\alpha=1$ gives
\begin{eqnarray}
\label{esti1a}
\Vert D_tG_h(t)A^{-1}v\Vert\leq Cht^{-1}\Vert A^{-1}v\Vert_1= Cht^{-1}\Vert A^{-1/2}v\Vert=Cht^{-1}\Vert v\Vert_{-1}.
\end{eqnarray}
Substituting \eqref{esti1a} in \eqref{esti1} yields
\begin{eqnarray}
\Vert G_h(t)v\Vert\leq Cht^{-1}\Vert v\Vert_{-1}.
\end{eqnarray}
Therefore \eqref{mil3} holds for $\rho=1$. Hence the proof of (iii) is completed  by interpolation theory.
\item[(iv)] Note that 
\begin{eqnarray}
\label{monsi1}
 \int_0^tG_h(s)vds&=&\int_0^tA^{-1}AS(s)vds-
 \int_0^tA_h^{-1}A_hS_h(s)P_hvds\nonumber\\
 &=&A^{-1}(S(t)-\mathbf{I})v-A_h^{-1}(S_h(t)-\mathbf{I})P_hv.
\end{eqnarray}
Taking the norm in \eqref{monsi1} yields
{\small
\begin{eqnarray}
\label{mil5}
\left\Vert\int_0^tG_h(s)vds\right\Vert\leq \Vert(A_h^{-1}P_h-A^{-1})v\Vert+\Vert (S(t)A^{-1}-S_h(t)A_h^{-1}P_h)v\Vert.
\end{eqnarray}
}
Using the first relation of \eqref{essai3} and \eqref{ritz1} with $r=2-\rho$ yields
\begin{eqnarray}
\label{mil5a}
\Vert (A_h^{-1}P_h-A^{-1})v\Vert&=&\Vert (R_hA^{-1}-A^{-1})v\Vert=\Vert (R_h-\mathbf{I})A^{-1}v\Vert\nonumber\\
&\leq& Ch^{2-\rho}\Vert A^{-1}v\Vert_{2-\rho}= Ch^{2-\rho}\Vert A^{-\rho/2}v\Vert\nonumber\\
&\leq& Ch^{2-\rho}\Vert v\Vert_{-\rho}.
\end{eqnarray}
Using again the first relation of \eqref{essai3} and triangle inequality, it holds that
\begin{eqnarray}
\label{mil6}
\Vert S(t)A^{-1}v-S_h(t)A_h^{-1}P_hv\Vert&=&\Vert S(t)A^{-1}v-S_h(t)R_hA^{-1}v\Vert\nonumber\\
&\leq& \Vert S(t)A^{-1}v-S_h(t)P_hA^{-1}v\Vert\nonumber\\
&+&\Vert S_h(t)(P_hA^{-1}v-R_hA^{-1}v)\Vert.
\end{eqnarray}
Aplying \lemref{lemma1} with $r=\alpha=2-\rho$ yields
\begin{eqnarray}
\label{mil7}
\Vert S(t)A^{-1}v-S_h(t)P_hA^{-1}v\Vert\leq Ch^{2-\rho}\Vert A^{-1}v\Vert_{2-\rho}= Ch^{2-\rho}\Vert v\Vert_{-\rho}.
\end{eqnarray}
Using the boundedness of $S_h(t)$,  the triangle inequality,  the best approximation property of the orthogonal projector $P_h$ and  
the relation \eqref{ritz1} with $r=\alpha=2-\rho$, it holds that
\begin{eqnarray}
\label{mil8}
\Vert S_h(t)(P_hA^{-1}v-R_hA^{-1}v)\Vert&\leq& \Vert P_hA^{-1}v-R_hA^{-1}v\Vert\nonumber\\
&\leq& \Vert P_hA^{-1}v-A^{-1}v\Vert+\Vert A^{-1}v-R_hA^{-1}v\Vert\nonumber\\
&=&\Vert (P_h-\mathbf{I})A^{-1}v\Vert+\Vert (R_h-\mathbf{I})A^{-1}v\Vert\nonumber\\
&\leq &\Vert (R_h-\mathbf{I})A^{-1}v\Vert+\Vert (R_h-\mathbf{I})A^{-1}v\Vert\nonumber\\
&=& 2\Vert (R_h-\mathbf{I})A^{-1}v\Vert\nonumber\\
&\leq& Ch^{2-\rho}\Vert A^{-1}v\Vert_{2-\rho}\leq Ch^{2-\rho}\Vert v\Vert_{-\rho}.
\end{eqnarray}
Substituting \eqref{mil8} and \eqref{mil7} in \eqref{mil6} yields
\begin{eqnarray}
\label{mil9}
\Vert S(t)A^{-1}v-S_h(t)A_h^{-1}P_hv\Vert\leq Ch^{2-\rho}\Vert v\Vert_{-\rho}.
\end{eqnarray}
Substituting \eqref{mil9} and \eqref{mil5a} in \eqref{mil5} yields
\begin{eqnarray}
\label{mil10}
\left\Vert \int_0^tG_h(s)vds\right\Vert\leq Ch^{2-\rho}\Vert v\Vert_{-\rho}.
\end{eqnarray}
This completes the proof of (iv).
\item[(v)] The proof of (v) can be found in \cite[Lemma 6.1 (ii)]{Antonio2}. 
%\item[(vi)]The proof of (vi) can be found in \cite[Lemma 3.2 (iv)]{Stig1}. Note that the proof of \cite[Lemma 3.2 (iv)]{Stig1} do not use the spectral decompostion of $A$ and works for general generator of analytic semigroup.
\end{itemize}
\end{proof}

Applying the  implicit Euler scheme to \eqref{deter1} gives the following fully discrete scheme for \eqref{model1a}
\begin{eqnarray}
\label{implicit2}
U^h_m+\Delta tA_hU^h_m=U^h_{m-1},\quad m=1,\cdots, M,\quad U^h_0=P_hv.
\end{eqnarray}
The numerical scheme \eqref{implicit2} can be written as follows:
\begin{eqnarray}
\label{implicit3}
U^h_m=S_{h, \Delta t}^mP_hv,\quad m=0,1\cdots, M,
\end{eqnarray}
where $S_{h, \Delta t}$ is defined by \eqref{operator1}.

\begin{lemma} 
\label{lemma3}
\begin{itemize}
\item[(i)] The backward difference \eqref{implicit3} is unconditionally stable. More precisely, for any $m$, $h$ and $\Delta t$ the following estimate holds
\begin{eqnarray*}
\Vert (\mathbf{I}+\Delta tA_h)^{-m}\Vert_{L(H)}\leq 1.
\end{eqnarray*}
\item[(ii)] Let  $0\leq q\leq 1$. Then for all $u \in H$, the following estimate holds
\begin{eqnarray*}
\Vert (S_h(t_m)-S_{h,\Delta t}^m)P_hu\Vert\leq C\Delta t^{q}t_m^{-q}\Vert u\Vert,\quad m=1,\cdots, M.
\end{eqnarray*}
\item[(iii)] For all $u\in\mathcal{D}(A^{\gamma-1})$, $0\leq\gamma\leq 2$, the following estimate holds
\begin{eqnarray*}
\Vert (S_{h,\Delta t}^m-S_h(t_m))P_hu\Vert\leq Ct_m^{-1/2}\Delta t^{\gamma/2}\Vert u\Vert_{\gamma-1}.
\end{eqnarray*}
\item[(iv)]
If $u\in\mathcal{D}(A^{\mu/2})$, $0\leq\mu\leq2$. Then the following error estimate holds 
\begin{eqnarray*}
\Vert (S_h(t_m)-S_{h,\Delta t}^m)P_hu\Vert\leq C\Delta t^{\mu/2}\Vert u\Vert_{\mu},\quad m=0,\cdots, M.
\end{eqnarray*}
\item[(v)] For  $u\in\mathcal{D}(A^{-\sigma})$, $0\leq\sigma\leq1$, the following estimate holds
\begin{eqnarray*}
\Vert (S_{h,\Delta t}^m-S_h(t_m))P_hu\Vert\leq Ct_m^{-1}\Delta t^{(2-\sigma)/2}\Vert u\Vert_{-\sigma}.
\end{eqnarray*}
\end{itemize}
\end{lemma}

\begin{proof}
\begin{itemize}
\item[(i)]
The proof of (i) can be found in \cite[Theorem 6.1]{Fujita}.

\item[(ii)]
Note that
\begin{eqnarray}
\label{monsi2}
\Vert (S_h(t_m)-S_{h,\Delta t}^m)P_hu\Vert\leq \Vert K_h(m)\Vert_{L(H)}\Vert P_hu\Vert,
\end{eqnarray}
where $K_h(m):=S_{h,\Delta t}^m-S_h(t_m)$. Then from \cite[(6.3)]{Fujita} and using the fact $\Delta t^{1-q}t_m^{-1+q}\leq 1$ we obtain
\begin{eqnarray}
\label{monsi3}
\Vert K_h(m)\Vert_{L(H)} \leq C\Delta tt_m^{-1}\leq C\Delta t^{q}t_m^{-q}.
\end{eqnarray}
Substituting \eqref{monsi3} in \eqref{monsi2} yields
\begin{eqnarray}
\label{tom2}
\Vert (S_h(t_m)-S_{h,\Delta t}^mP_h)u\Vert \leq C\Delta t^qt_m^{-q}\Vert u\Vert.
\end{eqnarray}
This  completes the proof of (ii).
\item[(iii)]
If $0\leq\gamma\leq 1$, then it follows from \cite[(70)]{Antonio2} (by taking $\alpha=\frac{1-\gamma}{2}$) that 
\begin{eqnarray}
\label{pass1}
\left\Vert A_h^{\frac{\gamma-1}{2}}P_hu\right\Vert\leq C\left\Vert A^{\frac{\gamma-1}{2}}u\right\Vert.
\end{eqnarray}
If $1\leq \gamma\leq 2$, then it follows from \cite[Lemma 1]{Antjd1} (by taking $\alpha=\frac{\gamma-1}{2}$) that 
\begin{eqnarray}
\label{pass2}
\left\Vert A_h^{\frac{\gamma-1}{2}}P_hu\right\Vert\leq C\left\Vert A^{\frac{\gamma-1}{2}}u\right\Vert.
\end{eqnarray}
Combining \eqref{pass1} and \eqref{pass2}, it holds that 
\begin{eqnarray}
\label{pass3}
\left\Vert A_h^{\frac{\gamma-1}{2}}P_hu\right\Vert\leq C\left\Vert A^{\frac{\gamma-1}{2}}u\right\Vert,\quad 0\leq \gamma\leq 2.
\end{eqnarray}
Inserting $A_h^{\frac{1-\gamma}{2}}A_h^{\frac{\gamma-1}{2}}$ and using \eqref{pass3} it holds that
\begin{eqnarray}
\label{debut0}
\left\Vert \left(S^m_{h,\Delta t}-S_h(t_m)\right)P_hu\right\Vert\leq \left\Vert K_h(m)A_h^{\frac{1-\gamma}{2}}\right\Vert_{L(H)}\Vert u\Vert_{\gamma-1}.
\end{eqnarray}
As  in \cite[(6.4)]{Fujita} we can easily check that
\begin{eqnarray}
\label{debut1}
-K_h(m)A_h^{\frac{1-\gamma}{2}}&=&\int_0^{\Delta t}\frac{d}{ds}\left((\mathbf{I}+sA_h)^{-m}e^{-m(\Delta t-s)A_h}\right)A_h^{\frac{1-\gamma}{2}}ds\nonumber\\
&=&m\int_0^{\Delta t}sA_h^2(\mathbf{I}+sA_h)^{-m-1}e^{-m(\Delta t-s)A_h}A_h^{\frac{1-\gamma}{2}}ds\nonumber\\
&=&m\int_0^{\Delta t}sA_h^{\frac{4-\gamma}{2}}(\mathbf{I}+sA_h)^{-(m+1)}e^{-m(\Delta t-s)A_h}A_h^{\frac{1}{2}}ds.
\end{eqnarray}
Using the stability property of the semigroup, it holds that
\begin{eqnarray}
\label{debut2}
\left\Vert e^{-m(\Delta t-s)A_h}A_h^{1/2}\right\Vert_{L(H)}\leq C(m(\Delta t-s))^{-1/2}.
\end{eqnarray}
Using \cite[(6.6)]{Fujita} with $\alpha=\frac{4-\gamma}{2}$, it holds that
\begin{eqnarray}
\label{debut3}
\left\Vert A_h^{\frac{4-\gamma}{2}}(\mathbf{I}+sA_h)^{-(m+1)}\right\Vert_{L(H)}\leq C((m+1)s)^{\frac{-4+\gamma}{2}}\leq  C(ms)^{\frac{-4+\gamma}{2}}.
\end{eqnarray}
Substituting \eqref{debut3} and \eqref{debut2} in \eqref{debut1} yields
\begin{eqnarray}
\label{debut4}
\left\Vert K_h(m)A_h^{\frac{1-\gamma}{2}}\right\Vert_{L(H)}&\leq&Cm\int_0^{\Delta t}s(ms)^{-2+\gamma/2}(m(\Delta t-s))^{-1/2}ds\nonumber\\
&\leq& Cm^{-3/2+\gamma/2}\int_0^{\Delta t}s^{-1+\gamma/2}(\Delta t-s)^{-1/2}ds\nonumber\\
&\leq& Cm^{-3/2+\gamma/2}\Delta t^{-1/2+\gamma/2}=Ct_m^{-3/2+\gamma/2}\Delta t=Ct_m^{-1/2}t_m^{-1+\gamma/2}\Delta t\nonumber\\
&\leq& Ct_m^{-1/2}t_1^{-1+\gamma/2}\Delta t= Ct_m^{-1/2}\Delta t^{\gamma/2}.
\end{eqnarray}
Substituting \eqref{debut4} in \eqref{debut0} completes the proof of (iii).
\item[(iv)]
Let us recall that for all $u\in\mathcal{D}(A^{\mu/2})$ the following estimate holds (see \cite[Lemma 1]{Antjd1})
\begin{eqnarray}
\label{pass4}
\left\Vert A_h^{\mu/2}P_hu\right\Vert\leq C\left\Vert A^{\mu/2}u\right\Vert, \quad 0\leq \mu \leq 2.
\end{eqnarray}
Inserting $A_h^{-\mu/2}A_h^{\mu/2}$ and using \eqref{pass4} yields
\begin{eqnarray}
\label{suite1}
\left\Vert \left(S_h(t_m)-S_{h,\Delta t}^m\right)P_hu\right\Vert&\leq& \left\Vert \left(S_h(t_m)-S_{h,\Delta t}^m\right)A_h^{-\mu/2}\right\Vert_{L(H)}\left\Vert A_h^{\mu/2}P_hu\right\Vert\nonumber\\
&\leq& C\left\Vert K_h(m)A_h^{-\mu/2}\right\Vert_{L(H)}\Vert u\Vert_{\mu}.
\end{eqnarray}
Following the same as the estimate \eqref{debut4}, one can easily prove that
\begin{eqnarray}
\label{suite5a}
\Vert K_h(m)A^{-\mu/2}\Vert_{L(H)}\leq C\Delta t^{\mu/2}.
\end{eqnarray}
% Again as in \cite[(6.4)]{Fujita}, we can easily check that
%\begin{eqnarray}
%\label{suite2}
%-K_h(m)A_h^{-\mu/2}&=&\int_0^{\Delta t}\frac{d}{ds}\left((\mathbf{I}+sA_h)^{-m}e^{-m(\Delta t-s)A_h}\right)A_h^{-\mu/2}ds\nonumber\\
%&=&m\int_0^{\Delta t}sA_h^2(\mathbf{I}+sA_h)^{-m-1}e^{-m(\Delta t-s)A_h}A_h^{-\mu/2}ds\nonumber\\
%&=&m\int_0^{\Delta t}sA_h(\mathbf{I}+sA_h)^{-(m+1)}A_h^{\frac{2-\mu}{2}}e^{-m(\Delta t-s)A_h}ds.
%\end{eqnarray}
%Using the stability property of the semigroup it holds that
%\begin{eqnarray}
%\label{suite3}
%\left\Vert A_h^{1-\mu/2}e^{-m(\Delta t-s)A_h}\right\Vert_{L(H)}\leq C(m(\Delta t-s))^{-1+\mu/2}.
%\end{eqnarray}
%Using \cite[(6.6)]{Fujita} with $\alpha=1$, it holds that
%\begin{eqnarray}
%\label{suite4}
%\left\Vert A_h(\mathbf{I}+sA_h)^{-(m+1)}\right\Vert_{L(H)}\leq C((m+1)s)^{-1}\leq C(ms)^{-1}.
%\end{eqnarray}
%Substituting \eqref{suite4} and \eqref{suite3} in \eqref{suite2} gives
%\begin{eqnarray}
%\label{suite5}
%\Vert K_h(m)A^{-\mu/2}\Vert_{L(H)}&\leq& Cm\int_0^{\Delta t}s(ms)^{-1}(m(\Delta t-s))^{-1+\mu/2}ds\nonumber\\
%&=& Cm^{-1+\mu/2}\int_0^{\Delta t}(\Delta t-s)^{-1+\mu/2}ds\nonumber\\
%&=&Cm^{-1+\mu/2}\Delta t^{\mu/2}=Cm^{-1+\mu/2}\Delta t^{-1+\mu/2}\Delta t\nonumber\\
%&=& Ct_m^{-1+\mu/2}\Delta t\leq Ct_1^{-1+\mu/2}\Delta t= C\Delta t^{\mu/2}.
%\end{eqnarray}
Substituting \eqref{suite5a} in \eqref{suite1} yields
\begin{eqnarray}
\label{suite6}
\left\Vert \left(S_h(t_m)-S_{h,\Delta t}^m\right)P_hu\right\Vert\leq C\Delta t^{\mu/2}\Vert u\Vert_{\mu}.
\end{eqnarray}
This completes the proof of (iv).
\item[(v)]
Inserting $A_h^{-\sigma/2}A_h^{\sigma/2}$ and using  \cite[(70)]{Antonio2} with $\alpha=\sigma$, we obtain
\begin{eqnarray}
\label{fin1}
\left\Vert \left(S_{h,\Delta t}^m-S_h(t_m)\right)P_hu\right\Vert&=&\left\Vert \left(S_{h,\Delta t}^m-S_h(t_m)\right)A_h^{\sigma/2}A_h^{-\sigma/2}P_hv\right\Vert\nonumber\\
&\leq&\left\Vert K_h(m)A^{\sigma/2}\right\Vert_{L(H)} \Vert u\Vert_{-\sigma}.
\end{eqnarray}
%As in \cite[(6.4)]{Fujita} we can easily check that
%\begin{eqnarray}
%\label{fin2}
%-K_h(m)A_h^{\sigma/2}&=&\int_0^{\Delta t}\frac{d}{ds}\left((\mathbf{I}+sA_h)^{-m}e^{-m(\Delta t-s)A_h}\right)A_h^{\sigma/2}ds\nonumber\\
%&=&m\int_0^{\Delta t}sA_h^2(\mathbf{I}+sA_h)^{-m-1}e^{-m(\Delta t-s)A_h}A_h^{\sigma/2}ds\nonumber\\
%&=&m\int_0^{\Delta t}sA_h^{\frac{6+\sigma}{4}}(\mathbf{I}+sA_h)^{-(m+1)}A_h^{\frac{2+\sigma}{4}}e^{-m(\Delta t-s)A_h}ds.
%\end{eqnarray}
%Using the stability property of the semigroup yields
%\begin{eqnarray}
%\label{fin3}
%\left\Vert A_h^{1/2+\sigma/4}e^{-m(\Delta t-s)A_h}\right\Vert_{L(H)}\leq C(m(\Delta t-s))^{-1/2-\sigma/4}.
%\end{eqnarray}
%Using \cite[(6.6)]{Fujita} with $\alpha=\frac{6+\sigma}{4}$ yields 
%\begin{eqnarray}
%\label{fin4}
%\left\Vert A_h^{\frac{6+\sigma}{4}}(\mathbf{I}+sA_h)^{-(m+1)}\right\Vert_{L(H)}\leq C((m+1)s)^{\frac{-6-\sigma}{4}}\leq C(ms)^{\frac{-6-\sigma}{4}}.
%\end{eqnarray}
%Substituting \eqref{fin4} and \eqref{fin3} in \eqref{fin2} yields
%\begin{eqnarray}
%\label{fin5}
%\Vert K_h(m)A_h^{\sigma/2}\Vert_{L(H)} &\leq& Cm\int_0^{\Delta t}s(ms)^{-3/2-\sigma/4}(m(\Delta t-s))^{-1/2-\sigma/4}ds\nonumber\\
%&\leq &Cm^{-1-\sigma/2}\int_0^{\Delta t}s^{-1/2-\sigma/4}(\Delta t-s)^{-1/2-\sigma/4}ds\nonumber\\
%&\leq& Cm^{-1-\sigma/2}\Delta t^{-\sigma/2}=Cm^{-1-\sigma/2}\Delta t^{-1-\sigma/2}\Delta t\nonumber\\
%&=&Ct_m^{-1-\sigma/2}\Delta t=Ct_m^{-1}t_m^{-\sigma/2}\Delta t\leq Ct_m^{-1}t_1^{-\sigma/2}\Delta t\nonumber\\
%&=& Ct_m^{-1}\Delta t^{(2-\sigma)/2}.
%\end{eqnarray}
Along the same lines as the estimate \eqref{debut4}, one can easily show that
\begin{eqnarray}
\label{fin5a}
\Vert K_h(m)A^{\sigma/2}\Vert_{L(H)}\leq Ct_m^{-1}\Delta t^{(2-\sigma)/2}.
\end{eqnarray}
Substituting \eqref{fin5a} in \eqref{fin1} completes the proof of (v).
\end{itemize}
\end{proof}

\begin{remark}
\lemref{lemma3} (ii) and (iv) generalise respectively \cite[Theorem 7.7]{Vidar} and \cite[Theorem 7.8]{Vidar} to general second-order
homogeneous parabolic equations.
\end{remark}

Let us introduce  the error  operator $G_{h,\Delta t}$ defined for all $t\in[0,T]$ by
\begin{eqnarray*}
G_{h,\Delta t}(t) := S_{h,\Delta t}^mP_h-S_h(t)P_h\quad\text{if}\quad t\in[t_{m-1}, t_m],\quad m=1,\cdots M.
\end{eqnarray*}

\begin{lemma}
\label{lemma4}
Let  $h, \Delta t\in(0,1]$. Then the following estimates hold 
\begin{itemize}
\item[(i)] Let $0\leq\rho\leq 1$. Then there exists a constant $C$ such that
\begin{eqnarray*}
\Vert G_{h,\Delta t}(t)u\Vert\leq Ct^{-\rho/2}\Vert u\Vert_{-\rho},\quad u\in\mathcal{D}(A^{-\rho}),\quad t>0.
\end{eqnarray*}
\item[(ii)] Let $0\leq\rho\leq 1$. Then there exists a constant $C$ such that
\begin{eqnarray}
\label{mes1}
\Vert G_{h,\Delta t}(t)u\Vert\leq C\Delta t^{\frac{(2-\rho)}{2}-\epsilon}t^{-1+\epsilon}\Vert u\Vert_{-\rho},\quad u\in\mathcal{D}(A^{-\rho}),
\end{eqnarray}
for all  $t>0$ and $\epsilon>0$ small enough.
\item[(iii)] Let $\mu\in[0,2]$ and $u\in\mathcal{D}(A^{\mu-1})$. Then the following estimate holds
\begin{eqnarray}
\label{jour0}
\Vert G_{h,\Delta t}(t)u\Vert\leq C\Delta t^{\frac{\mu}{2}-\epsilon}t^{-\frac{1}{2}+\epsilon}\Vert u\Vert_{\mu-1}.
\end{eqnarray}
\end{itemize}
\end{lemma}

\begin{proof}
\begin{itemize}
\item[(i)]
The proof of  (i) follows the sames lines as \cite[Lemma 4.3 (ii)]{Raphael}, which does not make use of the spectral decomposition and works for general linear operator.
\item[(ii)]
Using triangle inequality yields
\begin{eqnarray}
\label{mes2}
\Vert G_{h,\Delta t}(t)u\Vert&\leq& \Vert (S_{h,\Delta t}^m-S_h(t_m))P_hu\Vert+\Vert (S_h(t_m)-S_h(t_m))P_hu\Vert=:T_1+T_2.
\end{eqnarray}
Applying \lemref{lemma3} (v) with $\sigma=1$ yields 
\begin{eqnarray}
\label{mes3}
T_1\leq Ct_m^{-1}\Delta t^{(2-\rho)/2}\Vert u\Vert_{-\rho}\leq Ct^{-1}\Delta t^{\frac{(2-\rho)}{2}-\epsilon}\Vert u\Vert_{-\rho}.
\end{eqnarray}
Using the stability properties of the semigroup, it holds that
\begin{eqnarray}
\label{mes5}
T_2&=&\left\Vert S_h(t)(S_h(t_m-t)-\mathbf{I})A_h^{\rho/2}A_h^{-\rho/2}u\right\Vert\nonumber\\
&\leq& \left\Vert A_hS_h(t)(S_h(t_m-t)-\mathbf{I})A_h^{\rho/2}\right\Vert_{L(H)}\left\Vert A_h^{-\rho/2}u\right\Vert\nonumber\\
&\leq&\Vert A_hS_h(t)\Vert_{L(H)}\left\Vert(S_h(t_m-t)-\mathbf{I})A_h^{(\rho-2)/2}\right\Vert_{L(H)}\Vert u\Vert_{-\rho}\nonumber\\
&\leq & Ct^{-1}(t_m-t)^{(2-\rho)/2}\Vert u\Vert_{-\rho}\leq Ct^{-1}\Delta t^{(2-\rho)/2}\Vert u\Vert_{-\rho}.
\end{eqnarray}
Substituting \eqref{mes5}  and \eqref{mes3} in \eqref{mes2} proves \eqref{mes1} for $\rho=1$. The  proof of (ii) is completed by interpolation theory.
\item[(iii)] We have the following decomposition
\begin{eqnarray}
\label{jour1}
\Vert G_{h,\Delta t}(t)u\Vert&\leq& \Vert (S_{h,\Delta t}^m-S_h(t_m))P_hu\Vert+
\Vert (S_h(t_m)-S_h(t))P_hu\Vert.
\end{eqnarray}
Inserting $A^{\frac{1-\mu}{2}}A^{\frac{\mu-1}{2}}$ and  using the stability property of the semigroup, we obtain
\begin{eqnarray}
\label{jour2}
\Vert (S_h(t_m)-S_h(t))P_hu\Vert&\leq& \Vert\left(\mathbf{I}-S_h(t_m-t)\right)S_h(t)u\Vert\nonumber\\
&\leq&\left\Vert (\mathbf{I}-S_h(t_m-t))S_h(t)A_h^{\frac{1-\mu}{2}}A_h^{\frac{\mu-1}{2}}u\right\Vert\nonumber\\
&\leq& \left\Vert \left(\mathbf{I}-S_h(t_m-t)\right)A_h^{-\mu/2}S_h(t)A_h^{1/2}\right\Vert_{L(H)}\Vert u\Vert_{\mu-1}\nonumber\\
&\leq & \left\Vert (\mathbf{I}-S_h(t_m-t))A_h^{-\mu/2}\right\Vert_{L(H)}\left\Vert S_h(t)A_h^{1/2}\right\Vert_{L(H)}\Vert u\Vert_{\mu-1}\nonumber\\
&\leq&C(t_m-t)^{\mu/2}t^{-1/2}\Vert u\Vert_{\mu-1}\nonumber\\
&\leq& C\Delta t^{\mu/2}t^{-1/2}\Vert u\Vert_{\mu-1}.
\end{eqnarray}
Following closely \lemref{lemma3} (iii) one can show that
\begin{eqnarray}
\label{jour4}
\left\Vert \left(S^m_{h,\Delta t}-S_h(t_m)\right)P_hu\right\Vert\leq Ct_m^{-1/2}\Delta t^{\mu/2}\Vert u\Vert_{\mu-1}\leq C\Delta t^{\mu/2} t^{-1/2}\Vert u\Vert_{\mu-1}.
\end{eqnarray}
Substituting  \eqref{jour4} and \eqref{jour2}  in \eqref{jour1}  completes the proof of (iii).
\end{itemize}
\end{proof}

\begin{lemma}
\label{lemma5}
Let $0\leq\rho\leq 1$ and and $\epsilon>0$ an arbitrarily small number.
\begin{itemize}
\item[(i)] For all $u\in\mathcal{D}(A^{-\rho})$ there exists a positive constant $C$ such that
\begin{eqnarray*}
\left\Vert\sum_{j=1}^m\int_{t_{j-1}}^{t_j}\left(S_{h,\Delta t}^jP_h-S_h(s)P_h\right)uds\right\Vert\leq C\Delta t^{\frac{2-\rho}{2}-\epsilon}\Vert u\Vert_{-\rho}.
\end{eqnarray*}
\item[(ii)] For any $\gamma\in[0,2]$ and $u\in\mathcal{D}(A^{\gamma-1})$, the following estimate holds
\begin{eqnarray}
\label{an1}
\left(\sum_{j=1}^m\int_{t_{j-1}}^{t_j}\left\Vert \left(S_{h,\Delta t}^jP_h-S_h(s)P_h\right)u\right\Vert^2ds\right)^{1/2}\leq C\Delta t^{\frac{\gamma}{2}-\epsilon}\Vert u\Vert_{\gamma-1}.
%\label{an2}
%\left(\sum_{j=1}^{m-1}\int_{t_{m-j}}^{t_{m-j+1}}\Vert (S_{h,\Delta t}^jP_h-S(s))u\Vert^2ds\right)^{1/2}\leq C(h^{\gamma}+\Delta t^{\frac{\gamma}{2}-\epsilon})\Vert u\Vert_{\gamma-1},
\end{eqnarray}
\end{itemize}
\end{lemma}

\begin{proof}
\begin{itemize}
\item[(i)]
Using triangle inequality we obtain
\begin{eqnarray}
\label{mat1}
&&\left\Vert\sum_{j=1}^m\int_{t_{j-1}}^{t_j}\left(S_{h,\Delta t}^jP_h-S_h(s)P_h\right)uds\right\Vert\nonumber\\
&\leq&\left\Vert\int_{0}^{\Delta t}\left(S_{h,\Delta t}^1P_h-S_h(s)P_h\right)uds\right\Vert+\left\Vert\sum_{j=2}^m\int_{t_{j-1}}^{t_j}\left(S_{h,\Delta t}^jP_h-S_h(s)P_h\right)uds\right\Vert\nonumber\\
&:=&T_4+T_5.
\end{eqnarray}
Using again triangle inequality, we obtain
\begin{eqnarray}
\label{mat2}
T_4&\leq&\int_{0}^{\Delta t}\left\Vert \left(S_{h,\Delta t}^1-S_h(t_1)\right)P_hu\right\Vert ds+\int_0^{\Delta t}\left\Vert\left(S_h(t_1)P_h-S_h(s)P_h\right)u\right\Vert ds\nonumber\\
&:=&T_{41}+T_{42}.
\end{eqnarray}
Applying \lemref{lemma3} (v) with $\sigma=\rho$ yields
\begin{eqnarray}
\label{mat3}
T_{41}\leq C\int_0^{\Delta t}t_1^{-1}\Delta t^{(2-\rho)/2}\Vert u\Vert_{-\rho}ds\leq C\Delta t^{(2-\rho)/2}\Vert u\Vert_{-\rho}.
\end{eqnarray}
Using triangle inequality, inserting an appropriate power of $A_h$, using the stability properties of the semigroup and   \cite[(70)]{Antonio2}, it holds that
\begin{eqnarray}
\label{mat9a}
T_{42}&\leq &\int_{0}^{\Delta t}\left\Vert \left(S_h(t_1)-S_h(s)\right)P_hu\right\Vert ds\nonumber\\
&=& \int_{0}^{\Delta t}\left\Vert \left(S_h(t_1-s)-\mathbf{I}\right)S_h(s)A_h^{\rho/2}A_h^{-\rho/2}P_hu\right\Vert ds\nonumber\\
&\leq& C\int_{0}^{\Delta t}\left\Vert \left(S_h(t_1-s)-\mathbf{I}\right)A_h^{\frac{-2+\rho}{2}+\epsilon}\right\Vert_{L(H)}\Vert S_h(s)A_h^{1-\epsilon}\Vert_{L(H)}\Vert u\Vert_{-\rho}ds\nonumber\\
&\leq& C\int_{0}^{\Delta t}(\Delta t-s)^{\frac{(2-\rho)}{2}-\epsilon}s^{-1+\epsilon}\Vert u\Vert_{-\rho}ds\nonumber\\
&\leq& C\Delta t^{\frac{(2-\rho)}{2}-\epsilon}\Vert u\Vert_{-\rho}\int_{0}^{\Delta t}s^{-1+\epsilon}ds\nonumber\\
&\leq& C\Delta t^{\frac{(2-\rho)}{2}-\epsilon}\Vert u\Vert_{-\rho}.
\end{eqnarray}
Substituting \eqref{mat9a} and \eqref{mat3} in \eqref{mat2} yields
\begin{eqnarray}
\label{mat11}
T_{4}\leq C\Delta t^{\frac{2-\rho}{2}-\epsilon}\Vert u\Vert_{-\rho}.
\end{eqnarray}
Introducing the error operator $G_{h,\Delta t}$ in $T_5$, using triangle inequality and \lemref{lemma4} (ii),  we obtain
\begin{eqnarray}
\label{mat12}
T_5&=&\left\Vert\sum_{j=2}^m\int_{t_{j-1}}^{t_j}G_{h,\Delta t}(s)uds\right\Vert\leq \sum_{j=2}^m\int_{t_{j-1}}^{t_j}\Vert G_{h,\Delta t}(s)u\Vert ds\nonumber\\
&\leq& C\Delta t^{\frac{2-\rho}{2}-\epsilon}\Vert u\Vert_{-\rho}\sum_{j=2}^m\int_{t_{j-1}}^{t_j}s^{-1+\epsilon}ds=C\Delta t^{\frac{2-\rho}{2}-\epsilon}\Vert u\Vert_{-\rho}\int_{\Delta t}^{t_m}s^{-1+\epsilon}ds\nonumber\\
&\leq& C\Delta t^{\frac{2-\rho}{2}-\epsilon}\Vert u\Vert_{-\rho}.
\end{eqnarray}
Substituting \eqref{mat12} and \eqref{mat11} in \eqref{mat1} completes the proof of (i).
\item[(ii)]
 Using triangle inequality and the estimate $(a+b)^2\leq 2a^2+2b^2$, we obtain
 \begin{eqnarray}
 \label{vers1}
 &&\sum_{j=1}^m\int_{t_{j-1}}^{t_j}\left\Vert\left(S_{h,\Delta t}^jP_h-S_h(s)P_h\right)u\right\Vert^2ds\nonumber\\
 &\leq&2\int_{0}^{\Delta t}\left\Vert\left(S_{h,\Delta t}^1-S_h(s)\right)P_hu\right\Vert^2ds+2\sum_{j=2}^m\int_{t_{j-1}}^{t_j}\left\Vert\left(S_{h,\Delta t}^jP_h-S_h(s)P_h\right)u\right\Vert^2ds\nonumber\\
 &:=&2T_6+2T_7.
 \end{eqnarray}
 Using triangle inequality and the estimate $(a+b)^2\leq 2a^2+2b^2$, we obtain
 \begin{eqnarray}
 \label{vers2}
 T_6&\leq& 2\int_0^{\Delta t}\left\Vert \left(S^1_{h,\Delta t}P_h-S_h(t_1)\right)P_hu\right\Vert^2ds+2\int_0^{\Delta t}\left\Vert\left(S_h(t_1)P_h-S_h(s)P_h\right)u\right\Vert^2ds\nonumber\\
 &:=&2T_{61}+2T_{62}.
 \end{eqnarray}
Using \lemref{lemma3} (iii) yields 
\begin{eqnarray}
\label{vers3}
T_{61}&\leq& C\int_{0}^{\Delta t}t_1^{-1}\Delta t^{\gamma}\Vert u\Vert^2_{\gamma-1}ds\leq C\Delta t^{\gamma}\Vert u\Vert^2_{\gamma-1}.
\end{eqnarray}
Inserting an appropriate power of $A_h$, using \eqref{pass3} and the stability properties of the semigroup yields
\begin{eqnarray}
\label{vers6a}
T_{62} &\le& \int_{0}^{\Delta t}\left\Vert \left(S_h(t_1-s)-\mathbf{I}\right)S_h(s)A_h^{(1-\gamma)/2}\right\Vert^2_{L(H)}\left\Vert A_h^{(\gamma-1)/2}P_hu\right\Vert^2ds\nonumber\\
&\leq& C\int_{0}^{\Delta t}\left\Vert \left(S_h(t_1-s)-\mathbf{I}\right)A_h^{-\frac{\gamma}{2}+\epsilon}\right\Vert^2_{L(H)}\left\Vert S_h(s)A_h^{\frac{1}{2}-\epsilon}\right\Vert^2_{L(H)}\Vert u\Vert^2_{\gamma-1}ds\nonumber\\
&\leq& C\int_{0}^{\Delta t}(t_1-s)^{\gamma-2\epsilon}\left\Vert S_h(s)A_h^{\frac{1}{2}-\epsilon}\right\Vert^2_{L(H)}\Vert u\Vert^2_{\gamma-1}ds\nonumber\\
&\leq& C\Delta t^{\gamma-2\epsilon}\Vert u\Vert_{\gamma-1}^2\int_{0}^{\Delta t}s^{-1+2\epsilon}ds\nonumber\\
&\leq& C\Delta t^{\gamma-2\epsilon}\Vert u\Vert_{\gamma-1}^2.
\end{eqnarray}
Substituting \eqref{vers6a} and \eqref{vers3} in \eqref{vers2} yields
\begin{eqnarray}
\label{vers8}
T_6\leq C\Delta t^{\gamma-2\epsilon}\Vert u\Vert^2_{\gamma-1}.
\end{eqnarray}
Introducing the error operator $G_{h,\Delta t}$ in $T_7$ and using \lemref{lemma4} (iii) yields
\begin{eqnarray}
\label{vers9}
T_7&=&\sum_{j=2}^m\int_{t_{j-1}}^{t_j}\Vert G_{h,\Delta t}(s)u\Vert^2ds= C\Delta t^{\gamma-2\epsilon}\Vert u\Vert^2_{\gamma-1}\sum_{j=2}^m\int_{t_{j-1}}^{t_j}s^{-1+2\epsilon}ds\nonumber\\
&=&C\Delta t^{\gamma-2\epsilon}\Vert u\Vert^2_{\gamma-1}\int_{\Delta t}^{t_m}s^{-1+2\epsilon}ds
\leq C\Delta t^{\gamma-2\epsilon}\Vert u\Vert^2_{\gamma-1}.
\end{eqnarray}
Substituting \eqref{vers9} and \eqref{vers8} in \eqref{vers1} completes the proof of (ii).
\end{itemize}
\end{proof}

\begin{remark}
The above error estimate for deterministic problem can also be used to extend the result in \cite{Kruse} to the case of not necessarily self-adjoint operator.
\end{remark}

\begin{lemma}\textbf{[Space error]}
\label{spaceerror}
Let  \assref{assumption2}, \assref{assumption3}, \assref{assumption4} and \assref{assumption5} be fulfilled, 
then the following error estimate holds for the mild solution \eqref{mild1} and the mild solution of the semidiscrete problem \eqref{semi1}
\begin{eqnarray}
\label{espa}
\Vert X(t)-X^h(t)\Vert_{L^p(\Omega,H)}\leq Ch^{\beta},\quad t\in[0,T].
\end{eqnarray}
Moreover, for additive noise, if we replace  \assref{assumption5} by \assref{assumption6b}, then the error estimate \eqref{espa}  holds.
\end{lemma}

\begin{proof}
The proof follows the same lines as \cite[Theorem 1.1]{Raphael}. Note that the proof of \cite[Theorem 1.1]{Raphael} uses many preparatory 
results which are heavily based on the spectral decomposition of the linear operator. This is to achieve optimal convergence order without
any logarithmic perturbation. With our above preparatory results, the proof of \lemref{spaceerror} is straightforward. \lemref{spaceerror} 
improves  \cite[Lemma 8]{Antjd1} in the case $\beta=2$. Although the case of not necessary self-adjoint operator were analysed 
in \cite[Theorem 6.1]{Antonio2}, the crucial case $\beta=2$ were not studied. 
\end{proof}

\subsection{Proof of \thmref{mainresult1} and \thmref{mainresult2}}
\label{mainproof}
With the above  new preparatory results, the proof of \thmref{mainresult1} follows the same lines as that 
of \cite[Theorem 1.2]{Raphael}. The proof of \thmref{mainresult2} follows the same lines as \cite[Theorem 4.1]{Xiaojie2}.
As mentioned in the introduction, the proof of \cite[Theorem 1.2]{Raphael} and \cite[Theorem 4.1]{Xiaojie2} 
use many preparatory results which are heavily based on the spectral decomposition of the linear self-adjoint operator $A$.

\section{Numerical simulations}
\label{numexperiment}
We consider the stochastic dominated advection
diffusion reaction SPDE \eqref{model} with  constant diagonal difussion tensor  $ \mathbf{D}= 10^{-2} \mathbf{I}_2=(D_{i,j}) $ in \eqref{operator},
and mixed Neumann-Dirichlet boundary conditions on $\Lambda=[0,L_1]\times[0,L_2]$. 
The Dirichlet boundary condition is $X=1$ at $\Gamma=\{ (x,y) :\; x =0\}$ and 
we use the homogeneous Neumann boundary conditions elsewhere.
The eigenfunctions $ \{e_{i,j} \} =\{e_{i}^{(1)}\otimes e_{j}^{(2)}\}_{i,j\geq 0}
$ of  the covariance operator $Q$ are the same as for  Laplace operator $-\varDelta$  with homogeneous boundary condition,  given by 
\begin{eqnarray*}
e_{0}^{(l)}(x)=\sqrt{\dfrac{1}{L_{l}}},\qquad 
e_{i}^{(l)}(x)=\sqrt{\dfrac{2}{L_{l}}}\cos\left(\dfrac{i \pi }{L_{l}} x\right),
%\qquad \lambda_{0}^{(l)}=0,\qquad
%\lambda_{i}^{(l)}=\dfrac{i \pi }{L_{l}}
\, i=\mathbb{N}
\end{eqnarray*}
where $l \in \left\lbrace 1, 2 \right\rbrace,\, x\in \Lambda$.
%with the corresponding eigenvalues $ \{\lambda_{i,j}\}_{i,j\geq 0} $ given by 
%$\lambda_{i,j}= (\lambda_{i}^{(1)})^{2}+ (\lambda_{j}^{(2)})^{2}$.
%We use two types of noise in our simulations. In both examples,
We assume that the noise can be represented as 
 \begin{eqnarray}
  \label{eq:W1}
  W(x,t)=\underset{i \in  \mathbb{N}^{2}}{\sum}\sqrt{\lambda_{i,j}}e_{i,j}(x)\beta_{i,j}(t), 
\end{eqnarray}
where $\beta_{i,j}(t)$ are
independent and identically distributed standard Brownian motions,  $\lambda_{i,j}$, $(i,j)\in \mathbb{N}^{2}$ are the eigenvalues  of $Q$, with
\begin{eqnarray}
\label{noise2}
 \lambda_{i,j}=\left( i^{2}+j^{2}\right)^{-(\beta +\delta)}, \, \beta>0,
\end{eqnarray} 
in the representation \eqref{eq:W1} for some small $\delta>0$. For additive noise, we take $B(u)=2$, so \assref{assumption6a} is obviously satisfied for $\beta=(0,2]$. 
For multiplicative noise, we take $b(u)=2u$ in \eqref{nemystskii}. Therefore, from \cite[Section 4]{Arnulf1} it follows that the operators  $B$ defined by \eqref{nemystskii}
fulfills obviously \assref{assumption4} and \assref{assumption5}.
For both additive and multiplicative noise, we consider the function $f$ used in \eqref{nemystskii} to be  $f(x,z)= \frac{z}{1+z^2}$ for all $(x,z)\in \Lambda\times\mathbb{R}$. Therefore the corresponding Nemytskii operator defined by \eqref{nemystskii} is given by 
$F(v)= \dfrac{v}{1+v^2}$, $v\in H$. We can easily check that 
\begin{eqnarray}
\label{deri1}
\vert f(x,z)\vert\leq C(1+\vert z\vert), \quad \frac{\partial f}{\partial z}(x,z)=\frac{1-z^2}{(1+z^2)^2},\quad \left\vert\frac{\partial f}{\partial z}(x,z)\right\vert\leq C,\quad (x,z)\in\Lambda\times\mathbb{R},\\
\label{deri2}
\frac{\partial^2f}{\partial z^2}(x,z)=\frac{-6z+2z^3}{(1+z^2)^3},\quad\frac{\partial^2 f}{\partial x_i\partial z}(x,z)=0,\quad \left\vert \frac{\partial^2 f}{\partial x_i\partial z}(x,z)\right\vert\leq C,\quad (x,z)\in\Lambda\times\mathbb{R}.
\end{eqnarray}
Classical estimates yield
\begin{eqnarray}
\label{deri2a}
\left\vert\frac{\partial^2f}{\partial z^2}(x,z)\right\vert\leq \frac{6\vert z\vert}{(1+\vert z\vert^2)^3}+\frac{2\vert z\vert^3}{(1+\vert z\vert^2)^3}\leq 8,\quad x\in\Lambda,\quad z\in\mathbb{R}.
\end{eqnarray}%$F(X)=-\vert X-1/2\vert$.  Note that $F$ is not differentiable at $X=1/2$ 
One can also easily check  that the Nemytskii operator $F(v)(x)=f(x,v(x))$, $x\in\Lambda$, $v\in H$ is twice differentiable, and the derivatives are given by
\begin{eqnarray}
\label{deri3}
F'(v)(u)(x)=\frac{\partial f}{\partial z}(x, v(x))u(x),\quad x\in\Lambda,\quad u,v\in H,\\
\label{deri4}
F''(v)(u_1,u_2)(x)=\frac{\partial^2f}{\partial z^2}(x,v(x))u_1(x)u_2(x),\quad x\in\Lambda,\quad u_1,u_2,v\in H.
\end{eqnarray}
Using \eqref{deri1} and  \eqref{deri3} it holds that
\begin{eqnarray}
\Vert F'(v)u\Vert&\leq& C\Vert u\Vert,\quad u,v\in H.
\end{eqnarray}
Using Cauchy-Schwartz inequality and \eqref{deri2a}, it follows from \eqref{deri4} that
\begin{eqnarray}
\label{deri5}
\Vert F''(v)(u_1, u_2)\Vert_{L^1(\Lambda,H)}&=&\int_{\Lambda}\vert F''(v)(u_1,u_2)(x)\vert dx\nonumber\\
&\leq& C\int_{\Lambda}\vert u_1(x)\vert\vert u_2(x)\vert dx\leq C\Vert u_1\Vert\Vert u_2\Vert.
\end{eqnarray}
Employing \eqref{deri5}, it holds that
\begin{eqnarray}
\label{nem3}
\left\Vert A^{-\eta}F''(v)(u_1, u_2)\right\Vert
&=&\sup_{\Vert w\Vert\leq 1}\left\vert\left\langle A^{-\eta}F''(v)(u_1, u_2), w\right\rangle\right\vert\nonumber\\
&=&\sup_{\Vert w\Vert\leq 1}\left\vert\left\langle F''(v)(u_1, u_2), (A^*)^{-\eta}w\right\rangle\right\vert\nonumber\\
&\leq& \left\Vert F''(v)(u_1, u_2)\right\Vert_{L^1(\Lambda, \mathbb{R})}\times\sup_{\Vert w\Vert\leq 1}\left\Vert (A^*)^{-\eta}w\right\Vert_{L^{\infty}(\Lambda,\mathbb{R})}\nonumber\\
&\leq& C\Vert v_1\Vert\Vert v_2\Vert \sup_{\Vert w\Vert\leq 1}\left\Vert (A^*)^{-\eta}w\right\Vert_{L^{\infty}(\Lambda,\mathbb{R})}.
\end{eqnarray}
Using the Sobolev embedding 
\begin{eqnarray}
\label{nem4}
H^{2\eta}(\Lambda)\hookrightarrow L^{\infty}(\Lambda, \mathbb{R}),\quad \eta> \frac{d}{4},\; d=1, 2, 3,
\end{eqnarray}
it holds that
\begin{eqnarray}
\label{nem5}
\left\Vert (A^*)^{-\eta}w\right\Vert_{L^{\infty}(\Lambda, \mathbb{R})}&\leq& C\left\Vert (A^*)^{-\eta}w\right\Vert_{H^{2\eta}}\leq C\left\Vert (A)^{\eta}(A^*)^{-\eta}w\right\Vert\nonumber\\
 &\leq&C\left\Vert (A)^{\eta}(A^*)^{-\eta}\right\Vert_{L(H)}\Vert w\Vert\leq C\Vert w\Vert,
\end{eqnarray}
where at the last step we have used \cite[Lemma 3.1]{Antonio2}. Substituting \eqref{nem5} in \eqref{nem3} yields
\begin{eqnarray*}
\left\Vert(A)^{-\eta}F''(v)(u_1,u_2)\right\Vert\leq C\Vert u_1\Vert\Vert u_2\Vert.
\end{eqnarray*}
Therefore Assumptions \ref{assumption3} and \ref{assumption6b} are fulfilled.
We obtain the Darcy velocity field $\mathbf{q}=(q_i)$  by solving the following  system
\begin{equation}
  \label{couple1}
  \nabla \cdot\mathbf{q} =0, \qquad \mathbf{q}=-\mathbf{k} \nabla p,
\end{equation}
with  Dirichlet boundary conditions on 
$\Gamma_{D}^{1}=\left\lbrace 0,L_1 \right\rbrace \times \left[
  0,L_2\right] $ and Neumann boundary conditions on
$\Gamma_{N}^{1}=\left( 0,L_1\right)\times\left\lbrace 0,L_2\right\rbrace $ such that 
\begin{eqnarray*}
p&=&\left\lbrace \begin{array}{l}
1 \quad \text{in}\quad \left\lbrace 0 \right\rbrace \times\left[ 0,L_2\right]\\
0 \quad \text{in}\quad \left\lbrace L_1 \right\rbrace \times\left[ 0,L_2\right]
\end{array}\right. 
\end{eqnarray*}
and $- \mathbf{k} \,\nabla p (\mathbf{x},t)\,\cdot \mathbf{n} =0$ in  $\Gamma_{N}^{1}$. 
Note that $\mathbf{k}$ is the permeability tensor.  We use a random permeability field as in \cite[Figure 6]{Antofirst}.
%We use a heterogeneous medium with three parallel high permeability streaks, 1000 times 
%higher compared to the other part of the medium. This could represent for example a highly 
%idealized fracture pattern. 
Note that  in the coercivity inequality \eqref{ellip1}, we have $c_0=0$,  and that  keeping  the advection term in the nonlinear function $F$ as in \cite{Raphael} was
very unstable in our numerical experiments.
 The permeability field and the streamline of the velocity field $\mathbf{q}$ are given in \figref{FIGI}(b) and \figref{FIGI}(c) respectively.
To deal with high  P\'{e}clet number,  we discretise in space using
finite volume method, viewed as a finite element method (see \cite{Antonio3}).
%The nonlinear term is $F(X)=-\max(0,1-2X)$. Note that $F$ is not differentiable at $X=1/2$ 
%and satisfies the gobal Lipschitz condition  in  \assref{assumption1}.
We take $L_1= 3$ and $L_2=2$ and 
our reference solutions samples are numerical solutions using at time step of $\Delta t = 1/ 2048$. The errors are computed at the final time $T=1$.
The initial  solution  is $X_0=0$, so we can therefore expect high orders convergence, which depend  only on the noise term. For both additive and multiplicative noise, we use
$\beta =2$ and $\delta=10^{-3}$.
In \figref{FIGI}(a),  the order of convergence is $0.55$ for multiplicative noise and  $ 0.95$ for additive noise, which are close to $0.5$ and $1$ 
in our theoretical results in \thmref{mainresult1} and \thmref{mainresult2} respectively. The mean of the solution is given in \figref{FIGI}(d).
\begin{figure}[!ht]
%\begin{center}
  \subfigure[]{
    \label{FIGIa}
    \includegraphics[width=0.47\textwidth]{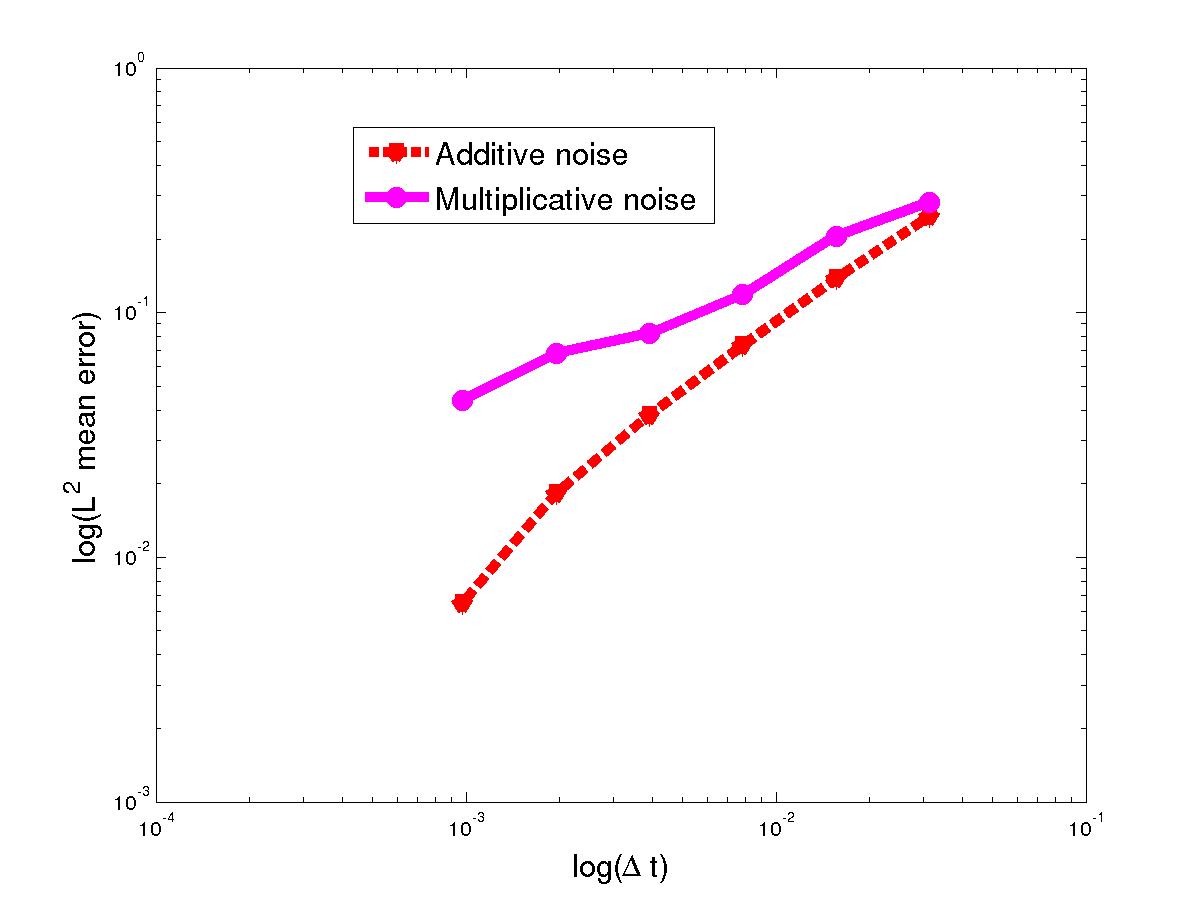}}
  \subfigure[]{
    \label{FIGIb}
    \includegraphics[width=0.47\textwidth]{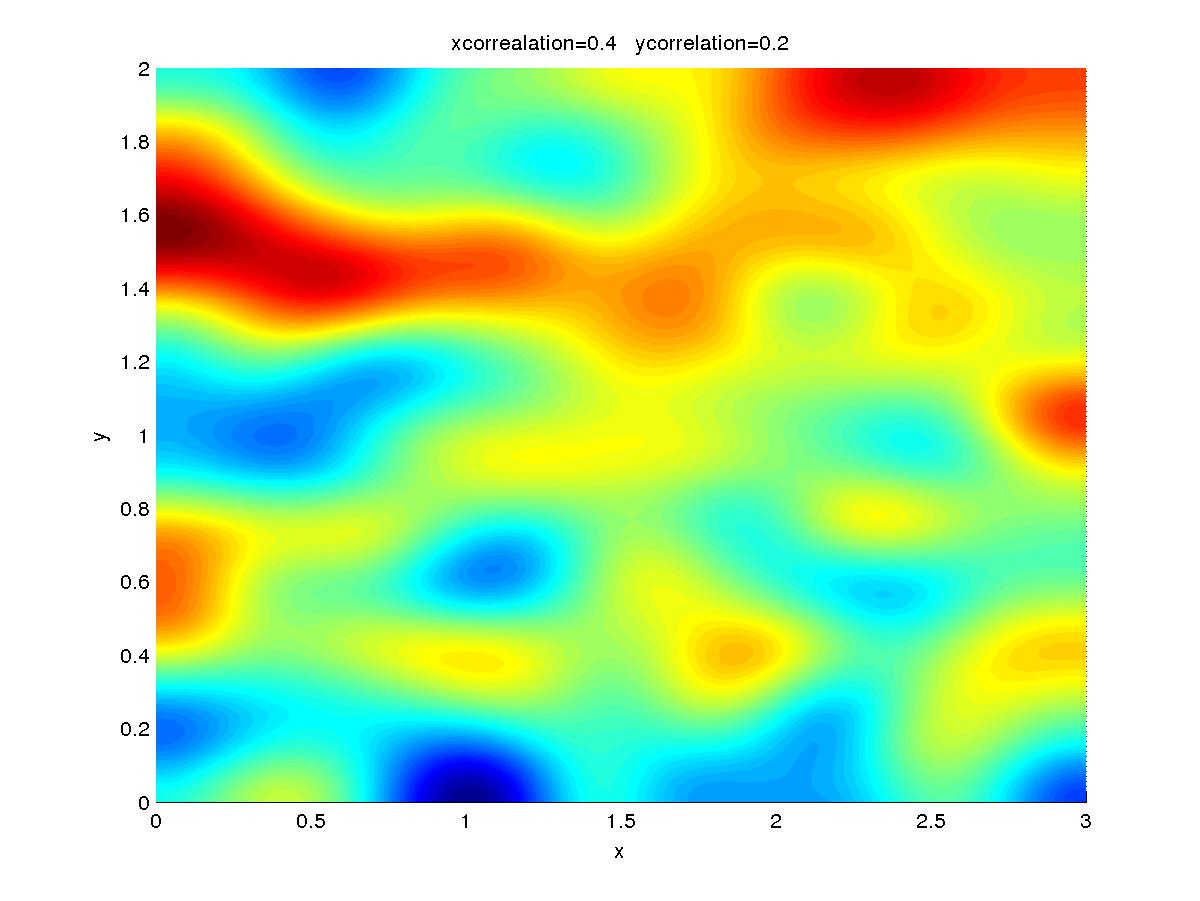}}
    %\begin{center}
    \subfigure[]{
    \label{FIGIc}
    \includegraphics[width=0.47\textwidth]{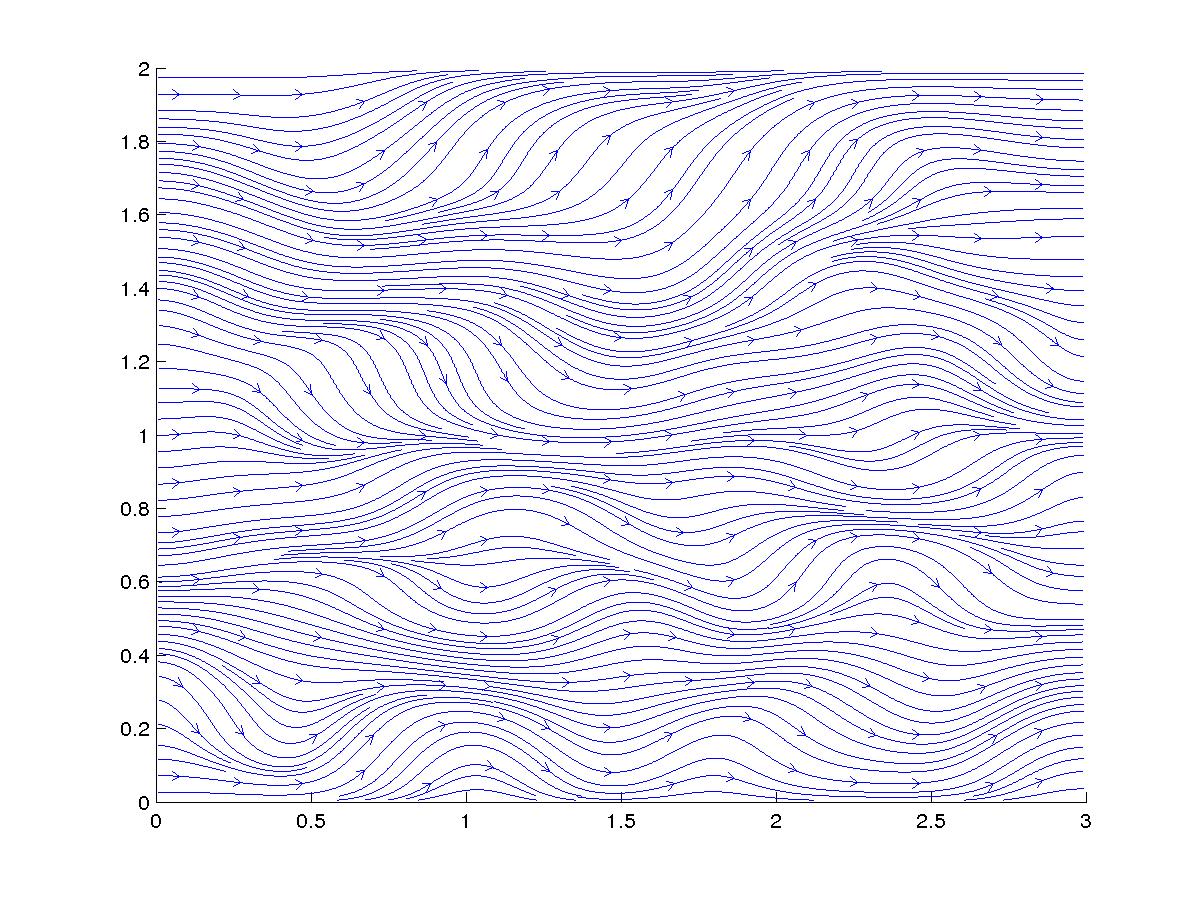}}
   \subfigure[]{
   \label{FIGId}
    \includegraphics[width=0.47\textwidth]{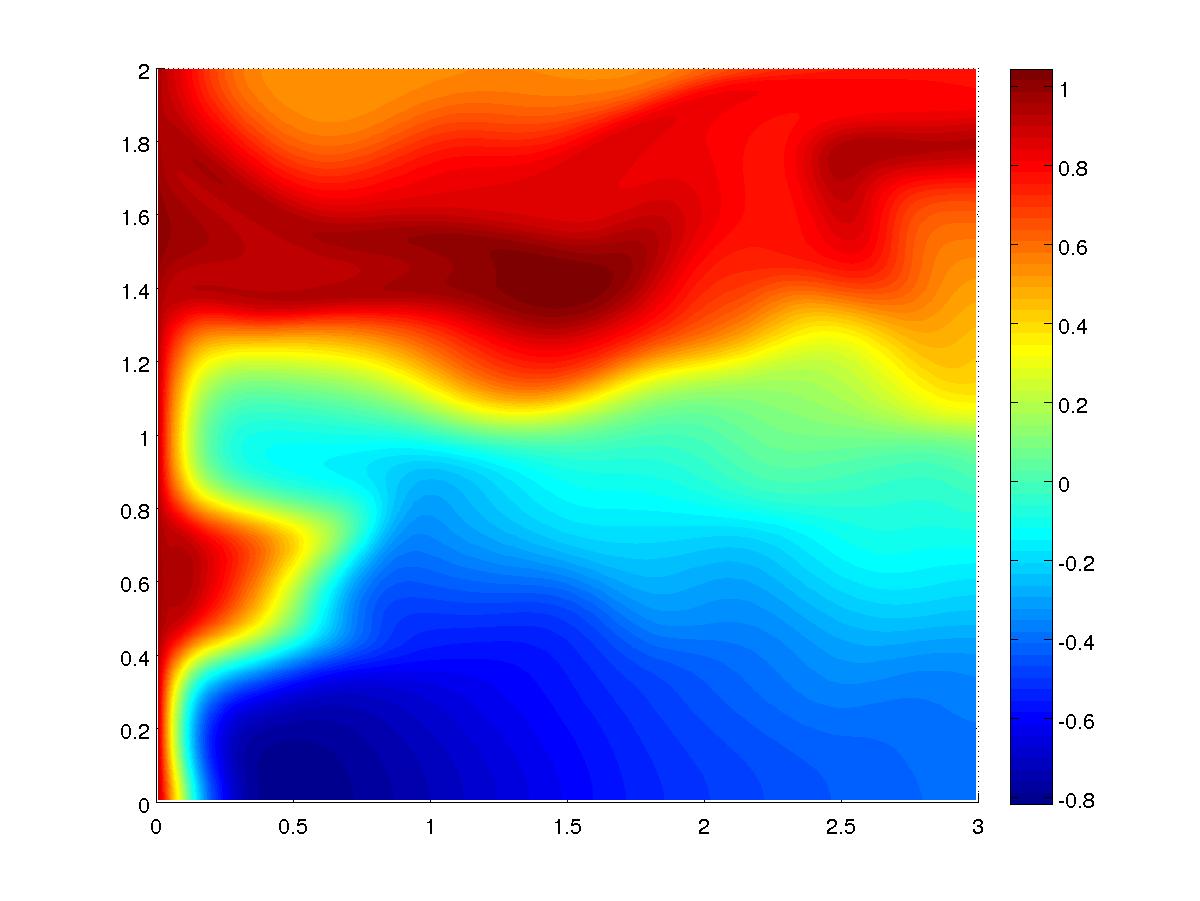}}
  \caption{(a) Convergence in the root mean square $L^{2}$ norm at $T=1$ as a
    function of $\Dt$. We show convergence for noise  where 
    $\beta=2$, and  $\delta=10^{-3}$ in relation \eqref{noise2}. We have  used here 30 realizations.  The order of convergence is $0.55$ for multiplicative noise and $0.95$ for additive noise. 
    Graph (b)  is the permeability field and  graph (c) is  the streamline of the velocity field $\mathbf{q}$.  The mean of the solution is given in (d).} 
  \label{FIGI} 
  %\end{center}
  \end{figure}
  %\clearpage
  %\newpage
\section*{Acknowledgement}

A. Tambue was supported by  the Robert Bosch Stiftung through the AIMS ARETE Chair programme (Grant No 11.5.8040.0033.0). J. D. Mukam acknowledges  
the financial support of the TU Chemnitz and thanks Prof. Dr. Peter Stollmann for his constant support. We would like to thank Dr.  Raphael Kruse for very useful discussions at an
early stage of this paper.
%\section*{References}

\end{document}